\newtheorem{theorem}{Theorem}[section]
\newtheorem{corollary}{Corollary}
\newtheorem{main}{Main Theorem}
\newtheorem{lemma}[theorem]{Lemma}
\newtheorem{proposition}{Proposition}
\newtheorem{example}{Example}
\theoremstyle{definition}
\newtheorem{definition}[theorem]{Definition}
\newtheorem{remark}{Remark}
\title[Application of HHD to the study of certain vector fields]{Application of Helmholtz--Hodge decomposition to the study of certain vector fields}
\author{Tomoharu Suda}
\begin{document}

\begin{abstract}
 Smooth vector fields on $\mathbb{R}^n$ can be decomposed into the sum of a gradient vector field and divergence-free (solenoidal) vector field under suitable hypotheses. This is called the Helmholtz--Hodge decomposition (HHD), which has been applied to analyze the topological features of vector fields. In this study, we apply the HHD to study certain types of vector fields. In particular, we investigate the existence of strictly orthogonal HHDs, which assure an effective analysis. The first object of the study is linear vector fields. We demonstrate that a strictly orthogonal HHD for a vector field of the form $\textbf{F}(\textbf{x}) = A \textbf{x}$ can be obtained by solving an algebraic Riccati equation. Subsequently, a method to explicitly construct a Lyapunov function is established. In particular, if A is normal, there exists an easy solution to this equation. Next, we study planar vector fields. In this case, the HHD yields a complex potential, which is a generalization of the notion in hydrodynamics with the same name. We demonstrate the convenience of the complex potential formalism by analyzing vector fields given by homogeneous quadratic polynomials.
\\
\smallskip
\noindent \textbf{Keywords.} Helmholtz--Hodge decomposition, Lyapunov function, complex potential, vector fields.
\end{abstract}
\maketitle
\section{Introduction}
Helmholtz-Hodge decomposition is a decomposition of vector fields, where they are represented as the sum of a gradient vector field and divergence-free (solenoidal) vector field. (The exact definition of the HHD is provided later in Section \ref{pre}.) It is a classical method widely used in applications. Many will first encounter the HHD in the study of electromagnetism, where it is applied to the construction of scalar and the vector potentials. Additionally, the HHD has been applied in hydrodynamics \cite{Denaro_On_2003}.

The application of the HHD is not limited to the study of physics. Recently, several methods have been proposed to use it as a tool to extract the topological feature of vector fields. These methods depend on the numerical construction of the HHD, and they are primarily intended as visualization methods. For a review, the reader is referred to Bhatia, Norgard, and Pascucci \cite{Bhatia_The_2013}.

However, the HHD may be computed rigorously if the vector fields are expressed in an explicit form. This will enable us to rigorously analyze the behavior of vector fields. Examples of results suitable for this purpose are scarce. Demongeot, Grade and Forest studied planar cases \cite{Demongeot_Li_2007, Demongeot_Li_2007_2}. Mendes and Duarte considered the approximation of general vector fields by divergence-free vector fields \cite{Duarte_Deformation_1983}. Chukanov and Ulyanov applied a special case of HHD for the stability analysis of linear systems in the context of control systems \cite{simple_dec}. Additionally, the author has proposed a method to construct Lyapunov functions using the HHD \cite{suda2019construction}. 

Although these methods are applicable, theoretically, to a relatively wide class of vector fields, their efficacy is not always guaranteed. The efficacy of an analysis depends on the setting of boundary conditions. If the boundary conditions do not match the dynamics of the system studied, artifacts may appear and a practical result cannot be obtained. An exception might be the natural HHD, which is ``intrinsic'' and  independent of boundary conditions \cite{Bhatia_The_2014}. However, it is not suitable for the mathematical analysis of vector fields in general because it requires a definition for ``inside'' or ``outside'' of the domain of interest. 

Therefore, an intrinsic and exact condition is required to assure that HHD-based methods are applicable. In an earlier paper by the author, it was suggested that the existence of a strictly orthogonal HHD ensures the efficacy of an analysis based on the HHD \cite{suda2019construction}. This is the case where the gradient vector field and the divergence-free vector fields are orthogonal everywhere. If this condition holds, we can ``read off'' information regarding the dynamics by studying each component of the decomposition. As such,  vector fields with a strictly orthogonal HHD may be regarded as a generalization of gradient vector fields. In particular, a Lyapunov function is easily obtained. However, they are difficult to construct, and their existence is not obvious.

In this study, we investigate the existence of a strictly orthogonal HHD for certain types of vector fields using constructive methods. Furthermore, we illustrate its use and limitations. Although the vector fields studied herein are linear or planar and far from being general, the analysis of their properties will clarify the basic principle. Information regarding their behavior is essential in constructing a theory applicable to a wider class of systems.

Regarding the construction of the Lyapunov function, SDE decomposition, which was introduced by Ao, is closely related to the HHD-based method considered here \cite{ao2004potential, kwon2005structure, yuan2017sde}. We also discuss the relationship between the former and the latter.

Now, we present the main results of this study.
First, we consider HHD for a linear system expressed by an ordinary equation of the following form: 
\[\frac{d {\bf x}}{d t} = A {\bf x},\]
where ${\bf x}$ is the unknown $\mathbb{R}^n$-valued function and $A$ is an $n\times n$ matrix. The following characterization enables us to calculate a strictly orthogonal HHD.
\begin{main}\label{thm_ch}
Let ${\bf F}({\bf x})= A {\bf x}$ be a vector field on $\mathbb{R}^n$, where $A$ is an  $n\times n$ rectangular matrix. Then, every strictly orthogonal HHD for ${\bf F}$ is in the form $A {\bf x} = -P {\bf x} + H{\bf x}, $
where $P$ is a symmetric matrix with ${\rm tr}(P) = -{\rm tr}(A)$ and satisfies
\begin{equation}\label{lyapunov_eqn}
2 P^2 + A^T P + PA = O.
\end{equation}
$H$ is found by taking $H = A + P.$ Conversely, if a symmetric matrix $P$ with ${\rm tr}(P) = -{\rm tr}(A)$ satisfies the equation (\ref{lyapunov_eqn}), then $A = -P + H,$ where $H = A + P,$ gives a strictly orthogonal HHD.
\end{main}
In particular, if the coefficient matrix is normal, we can easily find a strictly orthogonal HHD.
\begin{main}\label{thm_norm}
Let ${\bf F}({\bf x})= A {\bf x}$ be a vector field on $\mathbb{R}^n$, where $A$ is a normal $n\times n$ rectangular matrix.  Then there is a strictly orthogonal HHD of ${\bf F}$.
\end{main}
Next, we consider the planar vector fields. In this case, the HHD is of a simpler form and we may further formulate it in terms of a complex potential, which is a generalization of the concept with the same name in hydrodynamics. As in hydrodynamics, complex potentials render a significantly simpler analysis. With this formulation, the problem of constructing a strictly orthogonal HHD is rephrased into a problem of solving a differential equation. Using this result, we can demonstrate the next theorem. 
\begin{main}\label{thm_quad}
Let there be given a system of ordinary equations of the following form:
\[
\begin{split}
\frac{d x}{d t} &= f(x,y) = p_1 x^2 + q_1 xy + r_1 y^2,\\
\frac{d y}{d t} &= g(x,y) = p_2 x^2 + q_2 xy + r_2 y^2,
\end{split}
\]
where $p_i, q_i, r_i$ $(i=1,2)$ are real constants. If the coefficients satisfy the condition
\begin{equation}\label{cond_ohhd_e}
q_1^2 -2 (p_2-r_2) q_1 -4 p_2 r_2 + q_2^2 +2 (p_1-r_1) q_2 -4 p_1 r_1 = 0,
\end{equation}
then there exists a strictly orthogonal HHD of the vector field ${\bf F}(x,y) = \left(f(x,y),g(x,y)\right)^T$.
\end{main}

The remainder of this paper is organized as follows. In Section \ref{pre}, we present the basic terminologies and a few preliminary results from the literature. In Section \ref{a_li_sys}, we describe the application of the HHD to linear systems, which results in a method to extract information on the behavior of orbits. In Section \ref{a_pla_sys}, we discuss the planar systems. In this case, the HHD assumes a form that is analogous to the complex potential in hydrodynamics. Finally, in Section \ref{con_rem}, we present the concluding remarks.

\section{Preliminaries}\label{pre}
In this section, we present the basic terminologies and a few preliminary results from the literature.
First, we present the definition of the Helmholtz--Hodge decomposition as follows.
\begin{definition}[Helmholtz--Hodge decomposition]
\normalfont
For a vector field $ {\bf F}$ on a domain $\Omega \subset {\mathbb R}^n$, its {\bf Helmholtz--Hodge decomposition} ({\bf HHD}) is a decomposition of the form
$$ {\bf F} = -\nabla V + {\bf u},$$
where $V: \Omega \rightarrow\mathbb{R}$ is a $C^3$ function and ${\bf u}$ is a vector field on $\Omega$ with the property $\nabla \cdot {\bf u} = 0$. $V$ is called a {\bf potential function}.
\end{definition}

The existence of an HHD can be assured if the next Poisson equation has a solution on the domain of interest.
$$\Delta V = - \nabla \cdot {\bf F}.$$

In general, the HHD is not unique. Furthermore, it does not necessarily reflect the behavior of the original vector field. However, the decomposition of the next type preserves much information in this aspect, and it is a useful tool for the analysis if available.

\begin{definition}[S. \cite{suda2019construction}]
The HHD ${\bf F} =- \nabla V+ {\bf u} $ is said to be {\bf strictly orthogonal} on $D \subset {\mathbb R}^n$ if ${\bf u}({\bf x}) \cdot \nabla V({\bf x})= 0$ for all ${\bf x}\in D$.
\end{definition}
If ${\bf F} =- \nabla V+ {\bf u} $ is a strictly orthogonal HHD, then $V$ does not increase along the solutions. This is observed by
$$ \dot V({\bf x}) := \nabla V ({\bf x})\cdot {\bf F}({\bf x}) = - |\nabla V({\bf x})|^2 \leq0.$$
Here the function $\dot V := \nabla V ({\bf x})\cdot {\bf F}({\bf x}) $ is called the {\bf orbital derivative} of $V$. The condition $\dot V({\bf x}) \leq 0$ ensures that the orbits of the system ``slips down'' the (hyper-)surface given by $z = V({\bf x})$. Therefore, we may study the behavior of orbits by considering the level sets of $V.$ In particular, if an equilibrium point is stable, $V$ becomes a Lyapunov function. 
\begin{theorem}[S. \cite{suda2019construction}]\label{ohhd}
Let $D \subset {\mathbb R}^n$ be a bounded domain. If a vector field ${\bf F}:{\mathbb R}^n \to {\mathbb R}^n$ has a strictly orthogonal HHD on $\bar D$ with potential function $V$, then the following hold:
\begin{enumerate}
\item If $D$ is forward invariant, then for all ${\bf x} \in D$, $$\omega({\bf x}) \subset \{{\bf y} \in {\bar D}\,| \,\nabla V({\bf y}) = {\bf 0}\}.$$
\item If $D$ is backward invariant, then for all ${\bf x} \in D$,$$\alpha({\bf x}) \subset \{{\bf y} \in {\bar D}\,| \,\nabla V({\bf y}) = {\bf 0}\}.$$
\end{enumerate}
\end{theorem}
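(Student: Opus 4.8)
The plan is to prove Theorem \ref{ohhd} as a variant of LaSalle's invariance principle, with strict orthogonality doing the work that turns $V$ into a (weak) Lyapunov function. Write $\varphi_t$ for the local flow of ${\bf F}$ (available since ${\bf F}$ is at least locally Lipschitz, as throughout). The elementary starting point, already recorded above, is that on $\bar D$
\[
\dot V({\bf x}) = \nabla V({\bf x})\cdot {\bf F}({\bf x}) = \nabla V({\bf x})\cdot\bigl(-\nabla V({\bf x})+{\bf u}({\bf x})\bigr) = -|\nabla V({\bf x})|^2 \le 0 ,
\]
so $V$ does not increase along any orbit that stays in $\bar D$. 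The argument then has three steps: (i) for ${\bf x}\in D$, show the forward orbit is precompact and its $\omega$-limit set is a nonempty, compact, invariant subset of $\bar D$; (ii) show $V$ is constant on $\omega({\bf x})$; (iii) combine invariance of $\omega({\bf x})$ with $\dot V = -|\nabla V|^2$ to conclude $\nabla V \equiv {\bf 0}$ on $\omega({\bf x})$. Part (2) will then follow from part (1) by time reversal.

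For step (i): since ${\bf F}$ is defined on all of ${\mathbb R}^n$ and $D$ is bounded and forward invariant, for ${\bf x}\in D$ the solution $\varphi_t({\bf x})$ remains in $D$ for every $t\ge 0$; being trapped in the compact set $\bar D$ it cannot blow up in finite time, hence is defined for all $t\ge 0$ and its forward orbit has compact closure contained in $\bar D$. The standard theory of $\omega$-limit sets then yields that $\omega({\bf x})$ is nonempty, compact, invariant, and contained in $\bar D$ (connectedness is also available but not needed here). For step (ii): $V$ is continuous on the compact set $\bar D$, hence bounded below there; since $t\mapsto V(\varphi_t({\bf x}))$ is nonincreasing and bounded below, it converges to some $c\in{\mathbb R}$; and if ${\bf y}\in\omega({\bf x})$ with $\varphi_{t_n}({\bf x})\to{\bf y}$ for some $t_n\to\infty$, continuity of $V$ forces $V({\bf y})=\lim_n V(\varphi_{t_n}({\bf x}))=c$. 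Thus $V\equiv c$ on $\omega({\bf x})$.

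For step (iii): fix ${\bf y}\in\omega({\bf x})$. By invariance of $\omega({\bf x})$, $\varphi_t({\bf y})\in\omega({\bf x})$ for all $t$ near $0$, so $t\mapsto V(\varphi_t({\bf y}))\equiv c$ there; differentiating at $t=0$ gives $0=\dot V({\bf y})=-|\nabla V({\bf y})|^2$, whence $\nabla V({\bf y})={\bf 0}$. This proves (1). For (2), observe that $-{\bf F} = -\nabla(-V)+(-{\bf u})$, that $-{\bf u}$ is divergence-free, and that $(-{\bf u})\cdot\nabla(-V) = {\bf u}\cdot\nabla V = 0$ on $\bar D$; hence $-V$ is a strictly orthogonal potential for $-{\bf F}$ on $\bar D$, with $\{{\bf y}:\nabla(-V)({\bf y})={\bf 0}\}=\{{\bf y}:\nabla V({\bf y})={\bf 0}\}$. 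Since the $\alpha$-limit sets of ${\bf F}$ are exactly the $\omega$-limit sets of $-{\bf F}$, and $D$ is backward invariant for ${\bf F}$ if and only if it is forward invariant for $-{\bf F}$, applying part (1) to $-{\bf F}$ gives part (2).

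I expect the only delicate points to be the bookkeeping in step (i) — global forward existence and precompactness of the orbits issuing from $D$, which rely precisely on $D$ being bounded and ${\bf F}$ being defined on all of ${\mathbb R}^n$, so that no orbit can leave the compact set $\bar D$ — together with the appeal to invariance of $\omega({\bf x})$, which is what lets us treat the orbit through a point ${\bf y}\in\omega({\bf x})$ as two-sided and differentiate $V$ along it at $t=0$. Everything else is routine; should a self-contained treatment be desired, the facts used about $\omega$-limit sets (nonemptiness, compactness, invariance) can be reproved directly from precompactness of the forward orbit.
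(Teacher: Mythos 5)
Your proof is correct, and it is the standard LaSalle-invariance-principle argument that the key computation displayed just before the theorem ($\dot V = -|\nabla V|^2 \le 0$) is setting up; note that this paper itself gives no proof, deferring to the cited reference \cite{suda2019construction}, so there is nothing to diverge from. All three steps (precompactness of the forward orbit in $\bar D$, constancy of $V$ on $\omega({\bf x})$, and invariance forcing $\nabla V = {\bf 0}$ there), as well as the time-reversal reduction for part (2), check out.
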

These observations form the base of the analysis method proposed herein. 
 Details of the use of the strictly orthogonal HHD are provided in \cite{suda2019construction}.

The next estimate will be used later. Though it is elementary, we include its proof for completeness.
\begin{lemma}\label{lem_p_g}
Let $h: \mathbb{R}^n \to \mathbb{R}$ be a harmonic function, satisfying the following inequality for all ${\bf x} \in \mathbb{R}^n$:
$$|h({\bf x}) - h({\bf 0})| \leq C || {\bf x} ||^k,$$
where $C>0$ is a constant and $k$ is a positive integer. Then, $h$ is a polynomial of degree at the most $k.$
\end{lemma}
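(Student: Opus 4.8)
The plan is to treat this as a Liouville-type statement and exploit the interior derivative estimates for harmonic functions together with the prescribed polynomial growth. Concretely, I would show that every partial derivative of $h$ of order $k+1$ vanishes identically on $\mathbb{R}^n$, and then invoke the elementary fact that a $C^\infty$ function all of whose $(k+1)$-st order partials vanish is a polynomial of degree at most $k$.

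First, recall the standard estimate: if $u$ is harmonic on the closed ball $\overline{B({\bf x}_0,r)}$, then for every multi-index $\alpha$ there is a constant $C_{n,|\alpha|}$, depending only on $n$ and $|\alpha|$, such that
\[
|D^\alpha u({\bf x}_0)| \le \frac{C_{n,|\alpha|}}{r^{|\alpha|}}\,\sup_{B({\bf x}_0,r)}|u|.
\]
Since $h$ is harmonic on all of $\mathbb{R}^n$, this holds for every ${\bf x}_0$ and every $r>0$; this is the only place global harmonicity is used. From the hypothesis, $\sup_{B({\bf x}_0,r)}|h| \le |h({\bf 0})| + C(\|{\bf x}_0\|+r)^k$. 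Fixing ${\bf x}_0$ and a multi-index $\alpha$ with $|\alpha| = k+1$, we get
\[
|D^\alpha h({\bf x}_0)| \le \frac{C_{n,k+1}}{r^{k+1}}\Bigl(|h({\bf 0})| + C(\|{\bf x}_0\|+r)^k\Bigr),
\]
and letting $r \to \infty$ the right-hand side tends to $0$, because the bracketed term grows at most like $r^k$ while the denominator is $r^{k+1}$. Hence $D^\alpha h \equiv 0$ for every $\alpha$ with $|\alpha| = k+1$.

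To finish, since every $(k+1)$-st partial derivative of $h$ vanishes on $\mathbb{R}^n$, Taylor's theorem with integral remainder (integrating along segments from the origin) shows that $h$ equals its $k$-th order Taylor polynomial at ${\bf 0}$, so $h$ is a polynomial of degree at most $k$. As an alternative to the last step, one can argue by induction on $k$: the gradient estimate applied on $B({\bf x}_0,\|{\bf x}_0\|+1)$ shows that each $\partial_i h$ is harmonic and obeys a growth bound of order $k-1$, hence is a polynomial of degree at most $k-1$ by the inductive hypothesis, the base case $k=0$ being the classical Liouville theorem; this forces $h$ to have degree at most $k$.

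I do not anticipate a real obstacle here; the argument is routine. The only points needing a little care are that the interior derivative estimates are genuinely available on balls of arbitrarily large radius (so that the limit $r\to\infty$ is legitimate), and the passage from "all $(k+1)$-st derivatives vanish identically" to "polynomial of degree at most $k$", which is standard but worth stating.
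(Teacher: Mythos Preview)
Your proposal is correct and follows essentially the same route as the paper: both use the Cauchy-type interior derivative estimates for harmonic functions on balls of radius $r$, bound the supremum by $C(\|{\bf x}_0\|+r)^k$ (the paper first normalizes $h({\bf 0})=0$), and let $r\to\infty$ to kill all derivatives of order exceeding $k$. The only cosmetic difference is in the last step, where the paper appeals to real-analyticity of harmonic functions while you invoke Taylor's theorem or an induction on $k$; both conclusions are standard.
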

\begin{proof}
Without loss of generality, we may assume that $h({\bf 0}) = 0.$

Let us fix ${\bf x} \in \mathbb{R}^n$ and demonstrate that $D^\alpha h ({\bf x}) = 0$ if $\alpha$ is a multi-index of length $l > k.$

Let us recall Cauchy's estimates (Theorem 2.4 in \cite{axler2001harmonic}): If a harmonic function $u$ is bounded by $M >0 $ on $B_{r}({\bf x}) = \{{\bf y} \mid ||{\bf y} - {\bf x} || = r\},$ then we have the next inequality.
$$|D^\alpha u ({\bf x})|\leq \frac{C_{\alpha}}{r^l} M,$$
where $C_\alpha$ is a constant depending on the multi-index $\alpha$ and $l$ is the length of $\alpha.$

From the hypothesis, we have
$$|h({\bf y})| \leq C (r + ||{\bf x}||)^k,$$
for all ${\bf y} \in B_{r}({\bf x}).$ Therefore, we can apply Cauchy's estimates with $M = C (r + ||{\bf x}||)^k,$ and obtain
$$|D^\alpha h ({\bf x})|\leq \frac{C_\alpha C}{r^{l-k}} \left(1 + \frac{||{\bf x}||}{r} \right)^k.$$

By considering the limit as $r \to \infty,$ we have $D^\alpha h ({\bf x}) = 0$ if $l > k.$ Because $h$ is analytic, we conclude that $h$ is a polynomial of order at most $k.$
\end{proof}

\section{Analysis of linear systems}\label{a_li_sys}
In this section, we consider the HHD for linear systems, that is, vector fields are given in the form ${\bf F}({\bf x})= A {\bf x},$ where $A$ is a rectangular matrix. The purpose is to construct a strictly orthogonal HHD for linear vector fields and thereby obtain a method to construct Lyapunov functions.

Once we have a method to construct Lyapunov functions for linear systems, equilibrium points of nonlinear systems can be analyzed by the linear approximation.

 It should be noted that the Lyapunov functions obtained here can be constructed also by the method of SDE decomposition, as we discuss later.   

In what follows, we consider the HHD defined on $\mathbb{R}^n.$ Therefore, we say that an HHD exists if it is valid on $\mathbb{R}^n.$

First, we note that HHD for linear vector fields is characterized by a decomposition of the coefficient matrix.
\begin{lemma}\label{HHD_mat}
Let ${\bf F}({\bf x})= A {\bf x}$ be a linear vector field on $\mathbb{R}^n$, where $A \in {\rm Mat}(n, \mathbb{R})$. Then, there exists an HHD of ${\bf F}({\bf x})$ if and only if there exists a symmetric $P\in {\rm Mat}(n, \mathbb{R})$ and a matrix $H \in {\rm Mat}(n, \mathbb{R})$ with the following properties:
\begin{eqnarray*}
A &=& - P + H,\\
{\rm tr}(A) &=& - {\rm tr}(P),\\
{\rm tr}(H) &=& 0.
\end{eqnarray*}
\end{lemma}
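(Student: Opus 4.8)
The plan is to convert the analytic conditions in the definition of an HHD into the linear-algebraic ones in the statement, exploiting that a linear field ${\bf F}({\bf x}) = A{\bf x}$ has constant divergence $\nabla\cdot{\bf F} = {\rm tr}(A)$, so that the Poisson equation $\Delta V = -{\rm tr}(A)$ governing the existence of an HHD is solved by the quadratic form $V_0({\bf x}) = -\frac{{\rm tr}(A)}{2n}\|{\bf x}\|^2$. Both implications then amount to bookkeeping with traces.

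For the ``if'' direction I would take $P$ symmetric and $H$ as in the statement and set $V({\bf x}) = \frac12 {\bf x}^T P {\bf x}$ and ${\bf u}({\bf x}) = H{\bf x}$. Since $P$ is symmetric, $\nabla V = P{\bf x}$, hence $-\nabla V + {\bf u} = (-P + H){\bf x} = A{\bf x} = {\bf F}$; the function $V$ is a polynomial, hence $C^3$; and $\nabla\cdot{\bf u} = {\rm tr}(H) = 0$. So ${\bf F} = -\nabla V + {\bf u}$ is an HHD. One also notes $\Delta V = {\rm tr}(P) = -{\rm tr}(A)$, so the trace condition on $P$ is precisely the solvability condition for the Poisson equation, while ${\rm tr}(H) = 0$ is automatic once $A = -P + H$ and ${\rm tr}(P) = -{\rm tr}(A)$.

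For the ``only if'' direction, suppose ${\bf F} = -\nabla V + {\bf u}$ is an HHD and apply $\nabla\cdot$ to get ${\rm tr}(A) = -\Delta V$, i.e. $\Delta V = -{\rm tr}(A)$. I would then put $P := -\frac{{\rm tr}(A)}{n} I_n$ and $\widetilde V({\bf x}) := \frac12 {\bf x}^T P {\bf x}$, so that $\Delta\widetilde V = {\rm tr}(P) = -{\rm tr}(A) = \Delta V$ and hence $h := V - \widetilde V$ is harmonic on $\mathbb{R}^n$. Rewriting,
\[
{\bf F} = -\nabla\widetilde V + \left({\bf u} - \nabla h\right), \qquad \nabla\cdot\left({\bf u} - \nabla h\right) = -\Delta h = 0,
\]
so the new solenoidal remainder satisfies ${\bf u} - \nabla h = {\bf F} + \nabla\widetilde V = (A + P){\bf x}$, which is linear. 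Setting $H := A + P$ gives $A = -P + H$ with $P$ symmetric, ${\rm tr}(P) = -{\rm tr}(A)$, and ${\rm tr}(H) = {\rm tr}(A) + {\rm tr}(P) = 0$, as required.

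The argument has no serious obstacle; the one point that needs care is that in the ``only if'' direction the given potential $V$ need not itself be a quadratic polynomial, since the Poisson equation pins it down only up to an additive harmonic function. The device is exactly that such a harmonic correction has divergence-free gradient, so it can be absorbed into the solenoidal part, after which $V$ may be replaced by the explicit radial quadratic $\widetilde V$; in particular there is no need to invoke Lemma \ref{lem_p_g} here to force $V$ to be polynomial. (Since $V_0$ above always provides an HHD and the matrices $P = -\frac{{\rm tr}(A)}{n}I_n$, $H = A + P$ always exist, both sides of the equivalence in fact hold unconditionally for every $A$, and the real content of the lemma is the explicit dictionary between HHDs of linear fields and matrix decompositions $A = -P + H$.)
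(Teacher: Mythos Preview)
Your argument is correct, and the ``if'' direction matches the paper's. The ``only if'' direction, however, is handled differently. The paper simply evaluates Jacobians at the origin: from ${\bf F}=-\nabla V+{\bf u}$ one gets $A=-\textbf{J}_{\nabla V}({\bf 0})+\textbf{J}_{\bf u}({\bf 0})$, and since $\textbf{J}_{\nabla V}({\bf 0})$ is the Hessian of $V$ (hence symmetric) and ${\rm tr}\,\textbf{J}_{\bf u}({\bf 0})=(\nabla\cdot{\bf u})({\bf 0})=0$, the matrices $P=\textbf{J}_{\nabla V}({\bf 0})$ and $H=\textbf{J}_{\bf u}({\bf 0})$ do the job. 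Your route instead fixes the specific scalar choice $P=-\frac{{\rm tr}(A)}{n}I_n$ and absorbs the resulting harmonic discrepancy $V-\widetilde V$ into the solenoidal part.

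Both are valid, but they buy different things. The paper's Jacobian trick associates to \emph{each} HHD $-\nabla V+{\bf u}$ a canonical pair $(P,H)$ determined by $V$ and ${\bf u}$ themselves; this correspondence is exactly what is invoked later in the proof of Theorem~\ref{thm_ch}, where one needs $P=\textbf{J}_{\nabla V}({\bf 0})$ to argue that $\nabla V({\bf x})=P{\bf x}$. Your argument, by contrast, discards the given $V$ and produces a single universal decomposition (with $P$ scalar), which suffices for the lemma as stated and makes transparent, as you note, that both sides of the equivalence hold for every $A$; but it does not supply the HHD-to-matrices dictionary that the subsequent proofs rely on.
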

\begin{proof}
Let there exist an HHD of ${\bf F}({\bf x})$, ${\bf F} =- \nabla V+ {\bf u}.$ By considering the Jacobian matrix at ${\bf 0},$ we obtain
$$ A = \textbf{J}_{\textbf{F}} ({\bf 0}) = -\textbf{J}_{\nabla V}({\bf 0}) + \textbf{J}_{\textbf{u}}({\bf 0}),$$
where $\textbf{J}_{\textbf{F}} ({\bf 0})$ denotes the Jacobian matrix of $\textbf{F}$ at $\textbf{0}.$
In general, we have ${\rm tr}\left(\textbf{J}_{\textbf{v}} ({\bf 0})\right) = \left(\nabla \cdot {\bf v}\right)({\bf 0})$ for a vector field ${\bf v}$ on $\mathbb{R}^n.$ Furthermore, it is easily observed that $\textbf{J}_{\nabla V}({\bf 0})$ is the Hessian matrix of $V$ at $\textbf{0}$. Therefore, $P = \textbf{J}_{\nabla V}({\bf 0})$ and $H=\textbf{J}_{\textbf{u}}({\bf 0})$ satisfy the conditions stated above.

For the converse, it suffices if we set
\[\begin{split}
V({\bf x}) &= \frac{1}{2}{\bf x}^{T} P {\bf x},\\
{\bf u}({\bf x}) &= H {\bf x}.
\end{split}\] \end{proof}

For any $A \in {\rm Mat}(n, \mathbb{R})$, it is easy to observe that the conditions stated in Lemma \ref{HHD_mat} are satisfied if we take
\[
\begin{aligned}
	 P &= -\frac{1}{2} (A + A^T),\\
	H &= \frac{1}{2} (A - A^T).
\end{aligned}
\]
Thus we obtain the following result, which guarantees the existence of HHD for linear vector fields.
\begin{corollary}
Let ${\bf F}({\bf x})= A {\bf x}$ be a linear vector field on $\mathbb{R}^n$, where $A \in {\rm Mat}(n, \mathbb{R})$. Then, there exists an HHD of ${\bf F}({\bf x}).$
\end{corollary}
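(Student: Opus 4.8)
The plan is to invoke Lemma~\ref{HHD_mat}, which reduces the existence of an HHD for ${\bf F}({\bf x}) = A{\bf x}$ to exhibiting a symmetric matrix $P$ and a trace-free matrix $H$ satisfying $A = -P + H$ and ${\rm tr}(A) = -{\rm tr}(P)$. Since every square matrix decomposes canonically into its symmetric and skew-symmetric parts, the natural candidate is $P = -\frac{1}{2}(A + A^T)$ and $H = \frac{1}{2}(A - A^T)$, which is precisely the choice already flagged in the paragraph preceding the corollary.

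First I would verify $A = -P + H$, which is just the identity $A = \frac{1}{2}(A + A^T) + \frac{1}{2}(A - A^T)$. Next, $P$ is symmetric by construction, while $H$ is skew-symmetric and hence ${\rm tr}(H) = 0$. Finally, ${\rm tr}(P) = -\frac{1}{2}\left({\rm tr}(A) + {\rm tr}(A^T)\right) = -{\rm tr}(A)$, which supplies the last bookkeeping condition. Applying the converse direction of Lemma~\ref{HHD_mat}, the pair $V({\bf x}) = \frac{1}{2}{\bf x}^{T} P {\bf x}$ and ${\bf u}({\bf x}) = H{\bf x}$ then constitutes an explicit HHD of ${\bf F}$ valid on all of $\mathbb{R}^n$.

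I do not expect any genuine obstacle: the substantive work is entirely contained in Lemma~\ref{HHD_mat}, and what remains is only to observe that the symmetric/skew-symmetric splitting automatically meets the three conditions, the trace constraint on $H$ being free because a skew-symmetric matrix is always trace-free. If desired, one could add a brief remark that this particular HHD is in general not strictly orthogonal, which is exactly the refinement handled by Main Theorems~\ref{thm_ch} and~\ref{thm_norm}; but for the bare existence claim the construction above is complete.
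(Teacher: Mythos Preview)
Your proposal is correct and matches the paper's own argument exactly: the corollary is deduced from Lemma~\ref{HHD_mat} by taking $P = -\frac{1}{2}(A + A^{T})$ and $H = \frac{1}{2}(A - A^{T})$ and checking the three conditions. There is nothing to add or correct.
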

The existence of a strictly orthogonal HHD is characterized as follows.
\begin{corollary}\label{cor_so}
Let ${\bf F}({\bf x})= A {\bf x}$ be a linear vector field on $\mathbb{R}^n$, where $A \in {\rm Mat}(n, \mathbb{R})$. Then there exists a strictly orthogonal HHD of ${\bf F}({\bf x})$ if and only if there exist a symmetric $P\in {\rm Mat}(n, \mathbb{R})$ and a matrix $H \in {\rm Mat}(n, \mathbb{R})$ with
$$ PH + H^T P= O,$$
in addition to the properties stated in Lemma \ref{HHD_mat}.
\end{corollary}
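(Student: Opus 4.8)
The plan is to combine Lemma~\ref{HHD_mat} with the strict orthogonality condition $\mathbf{u}(\mathbf{x})\cdot\nabla V(\mathbf{x})=0$ translated into matrix form. For the linear HHD coming from Lemma~\ref{HHD_mat} we have $V(\mathbf{x})=\tfrac12\mathbf{x}^T P\mathbf{x}$, so $\nabla V(\mathbf{x})=P\mathbf{x}$, and $\mathbf{u}(\mathbf{x})=H\mathbf{x}$. Hence strict orthogonality says $(P\mathbf{x})^T(H\mathbf{x})=\mathbf{x}^T P H\mathbf{x}=0$ for all $\mathbf{x}\in\mathbb{R}^n$. The first step is to recognize that a quadratic form $\mathbf{x}^T M\mathbf{x}$ vanishes identically if and only if the symmetric part of $M$ is zero, i.e. $M+M^T=O$. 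Applying this with $M=PH$ and using that $P$ is symmetric ($P^T=P$) gives $PH+(PH)^T=PH+H^T P^T=PH+H^T P=O$, which is exactly the stated condition. This handles the ``only if'' direction once we also invoke the first half of Lemma~\ref{HHD_mat} to get the existence of such $P$ and $H$ with the trace conditions.

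For the converse, I would start from a symmetric $P$ and a matrix $H$ satisfying $A=-P+H$, $\mathrm{tr}(A)=-\mathrm{tr}(P)$, $\mathrm{tr}(H)=0$, and additionally $PH+H^T P=O$. By the ``if'' direction of Lemma~\ref{HHD_mat}, setting $V(\mathbf{x})=\tfrac12\mathbf{x}^T P\mathbf{x}$ and $\mathbf{u}(\mathbf{x})=H\mathbf{x}$ yields an HHD of $\mathbf{F}$. It then remains to check strict orthogonality: $\nabla V(\mathbf{x})\cdot\mathbf{u}(\mathbf{x})=(P\mathbf{x})^T(H\mathbf{x})=\mathbf{x}^T PH\mathbf{x}=\tfrac12\mathbf{x}^T(PH+H^T P)\mathbf{x}=0$, using $P^T=P$ to symmetrize. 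So the decomposition is strictly orthogonal on all of $\mathbb{R}^n$, completing the proof.

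I do not anticipate a serious obstacle here; the statement is essentially a transcription of the geometric condition into linear algebra. The only point requiring a little care is the elementary fact that $\mathbf{x}^T M\mathbf{x}\equiv 0$ forces $M+M^T=O$ (over $\mathbb{R}$); I would either cite it as standard or give the one-line argument by polarization (evaluating on $\mathbf{e}_i$, $\mathbf{e}_i+\mathbf{e}_j$). One should also make sure to explicitly note that every linear HHD arises via Lemma~\ref{HHD_mat} with $V$ quadratic and $\mathbf{u}$ linear, so that the reduction to the matrix condition loses no generality; this is already contained in the proof of that lemma (taking Jacobians and Hessians at the origin). With those two observations in place the corollary follows immediately in both directions.
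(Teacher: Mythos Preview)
Your converse (``if'') direction is correct and matches the paper exactly: build the linear HHD from $P,H$ as in Lemma~\ref{HHD_mat} and check $\mathbf{x}^T PH\,\mathbf{x}=\tfrac12\mathbf{x}^T(PH+H^TP)\mathbf{x}=0$.

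There is, however, a genuine gap in your ``only if'' direction. The hypothesis is that \emph{some} strictly orthogonal HHD $\mathbf{F}=-\nabla V+\mathbf{u}$ exists; nothing forces $V$ to be quadratic or $\mathbf{u}$ to be linear (one may add any harmonic function to $V$). Your identity $\mathbf{u}(\mathbf{x})\cdot\nabla V(\mathbf{x})=\mathbf{x}^T PH\,\mathbf{x}$ already presupposes $\nabla V(\mathbf{x})=P\mathbf{x}$ and $\mathbf{u}(\mathbf{x})=H\mathbf{x}$, so it does not apply to a general strictly orthogonal HHD. Your last paragraph tries to patch this by appealing to the Jacobian/Hessian construction in the proof of Lemma~\ref{HHD_mat}, but that argument only linearizes the \emph{vector} equation $\mathbf{F}=-\nabla V+\mathbf{u}$ to obtain $A=-P+H$; it says nothing about the \emph{scalar} equation $\mathbf{u}\cdot\nabla V=0$, which is what you need to linearize to get $PH+H^TP=O$. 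So ``this is already contained in the proof of that lemma'' is not quite right.

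The paper's fix is exactly what it means by ``considering the derivatives'': differentiate the identity $\mathbf{u}(\mathbf{x})\cdot\nabla V(\mathbf{x})\equiv 0$ twice at the origin. First, from $\mathbf{F}(\mathbf{0})=\mathbf{0}$ and $\nabla V(\mathbf{0})\perp\mathbf{u}(\mathbf{0})$ one gets $\nabla V(\mathbf{0})=\mathbf{u}(\mathbf{0})=\mathbf{0}$. Then the Hessian at $\mathbf{0}$ of the scalar function $\mathbf{u}\cdot\nabla V$ reduces to $H^TP+P H$ (the third-order terms drop because the first-order data vanish), and this Hessian is zero. That is the missing step; once inserted, your argument and the paper's coincide. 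Note that the much stronger fact that every strictly orthogonal HHD of $A\mathbf{x}$ is actually linear is the content of Theorem~\ref{thm_ch} and is not available here.
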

\begin{proof}
If a strictly orthogonal HHD of ${\bf F}({\bf x})$ exists, then we have ${\bf u}({\bf x}) \cdot \nabla V({\bf x})= 0$ for all ${\bf x}.$ By considering the derivatives, we obtain $ PH + H^T P= O.$

The converse is evident.
\end{proof}
From the proof of Lemma \ref{HHD_mat}, if a pair of matrices $(P,H)$ satisfies the conditions that $A=-P+H$, $P$ is symmetric and ${\rm tr}(A) = - {\rm tr}(P)$, then we have an HHD of the vector field $\textbf{F} = A \textbf{x}$ given by $A \textbf{x} = -P\textbf{x} + H \textbf{x}.$
This observation motivates the following definition.
\begin{definition}
\normalfont
Let $A \in {\rm Mat}(n, \mathbb{R})$ be a matrix. {\bf Helmholtz--Hodge decomposition (HHD)} of $A$ is a decomposition of the form $A = -P +H,$ where $P$ is symmetric, ${\rm tr}(A) = - {\rm tr}(P)$, and ${\rm tr}(H) = 0.$

Helmholtz-Hodge decomposition $A = -P +H$ of $A$ is said to be {\bf strictly orthogonal} if $ PH + H^T P= O$ holds.
\end{definition}
Because the HHD is obtained by adding two vector fields of different variances, it is not preserved under a change of coordinates. However, the HHD of the matrices is preserved, in a sense, under orthogonal transformations.
\begin{lemma}\label{lem_pres}
Let $A \in {\rm Mat}(n, \mathbb{R})$ be a matrix with an HHD. Let $S\in {\rm Mat}(n, \mathbb{R})$ be an orthogonal matrix. Then, there exists an HHD of $B =S A S^{T}$. If the decomposition of $A$ is given by $A = -P +H$, then $B$ is decomposed by $Q := S P S^T$ and $G:= SHS^T$ into the form $B = -Q +G.$
\end{lemma}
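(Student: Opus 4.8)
The plan is to verify directly that the triple $(B,Q,G) = (SAS^{T}, SPS^{T}, SHS^{T})$ satisfies the three defining conditions of an HHD of a matrix, using only that $S$ is orthogonal (so $S^{T}S = SS^{T} = I$) together with the cyclic invariance of the trace. There is no analytic content here; it is a bookkeeping computation, and the only thing one must be careful about is the order in which transposes are pushed through.

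First I would record the algebraic identity $B = -Q + G$ by simply distributing: $B = SAS^{T} = S(-P+H)S^{T} = -SPS^{T} + SHS^{T} = -Q + G$. Next I would check that $Q$ is symmetric, using $P^{T}=P$: $Q^{T} = (SPS^{T})^{T} = SP^{T}S^{T} = SPS^{T} = Q$. Then I would handle the two trace conditions via the cyclic property and $S^{T}S = I$: $\operatorname{tr}(Q) = \operatorname{tr}(SPS^{T}) = \operatorname{tr}(S^{T}SP) = \operatorname{tr}(P)$, and in the same way $\operatorname{tr}(B) = \operatorname{tr}(A)$ and $\operatorname{tr}(G) = \operatorname{tr}(H) = 0$. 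Combining these, $\operatorname{tr}(B) = \operatorname{tr}(A) = -\operatorname{tr}(P) = -\operatorname{tr}(Q)$, so $B = -Q + G$ is indeed an HHD of $B$ in the sense of the definition preceding the lemma. I would also remark, although the statement does not require it, that strict orthogonality is inherited as well: if $PH + H^{T}P = O$, then $QG + G^{T}Q = SPS^{T}\cdot SHS^{T} + (SHS^{T})^{T}\cdot SPS^{T} = S(PH + H^{T}P)S^{T} = O$, again by $S^{T}S = I$ and $(SHS^{T})^{T} = SH^{T}S^{T}$.

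The main (indeed the only) obstacle is notational care, but it is worth flagging where orthogonality is genuinely used: it is the relation $S^{T}S = I$, not mere invertibility of $S$, that is needed both for preserving symmetry of the $P$-part and for the identity $\operatorname{tr}(SPS^{T}) = \operatorname{tr}(P)$. An arbitrary similarity $S(\cdot)S^{-1}$ would destroy both, so the hypothesis that $S$ be orthogonal cannot be weakened.
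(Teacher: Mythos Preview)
Your proof is correct and is precisely the ``direct calculation'' that the paper declines to spell out; the paper's own proof consists of the single sentence ``Direct calculation.'' Your additional remark that strict orthogonality is inherited under orthogonal conjugation is exactly Corollary~\ref{cor_pres}, which the paper states separately.
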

\begin{proof}
Direct calculation.
\end{proof}
\begin{remark}
\normalfont
It is noteworthy that the change of coordinates ${\bf y} = S {\bf x}$ does not yield the decomposition stated in Lemma \ref{lem_pres}. Lemma \ref{lem_pres} is better interpreted as a construction of a decomposition {\it ab initio}.
\end{remark}
\begin{corollary}\label{cor_pres}
Let $A \in {\rm Mat}(n, \mathbb{R})$ be a matrix with a strictly orthogonal HHD. Let $S\in {\rm Mat}(n, \mathbb{R})$ be an orthogonal matrix. Then, there exists a strictly orthogonal HHD of $B =S A S^{T}$. If a decomposition of $A$ is given by $A = -P +H$, $B$ is decomposed by $Q := S P S^T$ and $G:= SHS^T.$
\end{corollary}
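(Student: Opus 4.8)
The plan is to build directly on Lemma~\ref{lem_pres}, which already does most of the work: it tells us that $B = SAS^T$ admits an HHD, namely $B = -Q + G$ with $Q = SPS^T$ symmetric, $\operatorname{tr}(B) = -\operatorname{tr}(Q)$, and $\operatorname{tr}(G) = 0$. Hence the only thing left to verify is the extra algebraic condition characterizing strict orthogonality in Corollary~\ref{cor_so} (equivalently, in the matrix definition), namely $QG + G^T Q = O$.

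So the single step I would carry out is this computation. Using $G = SHS^T$ and $G^T = SH^TS^T$, together with the orthogonality relation $S^T S = I$, I would expand
\[
QG + G^T Q = (SPS^T)(SHS^T) + (SH^TS^T)(SPS^T) = S(PH)S^T + S(H^TP)S^T = S(PH + H^T P)S^T.
\]
Since $A = -P + H$ is a strictly orthogonal HHD of $A$, we have $PH + H^T P = O$ by Corollary~\ref{cor_so}, so the right-hand side is $S\,O\,S^T = O$. Combining this with the three conditions supplied by Lemma~\ref{lem_pres} shows that $B = -Q + G$ is a strictly orthogonal HHD, which is exactly the claim.

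There is essentially no obstacle here — the proof is the one-line invariance calculation above, and the key point to flag is simply that orthogonality of $S$ is what makes the conjugation distribute over the products $PH$ and $H^T P$. In fact one could even phrase the whole proof as ``direct calculation,'' mirroring the proof of Lemma~\ref{lem_pres}, but spelling out the cancellation $S(PH + H^T P)S^T = O$ makes transparent why orthogonality (rather than mere invertibility) of $S$ is needed.
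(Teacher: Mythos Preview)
Your proposal is correct and matches the paper's approach: the paper does not spell out a proof of this corollary at all, treating it as an immediate consequence of Lemma~\ref{lem_pres} together with the strict-orthogonality condition, and your one-line conjugation computation $QG + G^T Q = S(PH + H^T P)S^T = O$ is exactly the ``direct calculation'' that fills that gap.
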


In general, we may use the following result to calculate strictly orthogonal HHD.

\begin{theorem}[Theorem \ref{thm_ch}]
Let ${\bf F}({\bf x})= A {\bf x}$ be a vector field on $\mathbb{R}^n$, where $A$ is an  $n\times n$ rectangular matrix. Then, every strictly orthogonal HHD for ${\bf F}$ is of the form $A {\bf x} = -P {\bf x} + H{\bf x}, $
where $P$ is a symmetric matrix with ${\rm tr}(P) = -{\rm tr}(A)$ and satisfies
\begin{equation}\label{lya_eqn}
2 P^2 + A^T P + PA = O.
\end{equation}
$H$ is found by taking $H = A + P.$ Conversely, if a symmetric $n\times n$ matrix $P$ with ${\rm tr}(P) = -{\rm tr}(A)$ satisfies equation (\ref{lya_eqn}), then $A = -P + H,$ where $H = A + P,$ gives a strictly orthogonal HHD.
\end{theorem}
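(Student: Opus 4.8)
The plan is to split the statement into a short algebraic converse and a rigidity argument for the forward direction, with Corollary~\ref{cor_so} handling the bookkeeping once the rigidity is established. For the converse, suppose $P=P^{T}$ satisfies $\mathrm{tr}(P)=-\mathrm{tr}(A)$ and $2P^{2}+A^{T}P+PA=O$, and put $H=A+P$, $V(\mathbf x)=\tfrac12\mathbf x^{T}P\mathbf x$, $\mathbf u(\mathbf x)=H\mathbf x$. Then $\mathbf F(\mathbf x)=A\mathbf x=-P\mathbf x+H\mathbf x=-\nabla V+\mathbf u$, and $\nabla\cdot\mathbf u=\mathrm{tr}(H)=\mathrm{tr}(A)+\mathrm{tr}(P)=0$; using $P^{T}=P$ one checks $PH+H^{T}P=P(A+P)+(A+P)^{T}P=PA+A^{T}P+2P^{2}=O$, so $\mathbf u\cdot\nabla V=\mathbf x^{T}H^{T}P\mathbf x=\tfrac12\mathbf x^{T}(H^{T}P+PH)\mathbf x\equiv 0$, i.e. the HHD is strictly orthogonal.

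For the forward direction, let $\mathbf F=-\nabla V+\mathbf u$ be an arbitrary strictly orthogonal HHD on $\mathbb R^{n}$; the crux is to show that $V$ is a quadratic form (up to an additive constant) and that $\mathbf u$ is linear. First I would normalize at the origin: $\mathbf F(\mathbf 0)=\mathbf 0$ gives $\mathbf u(\mathbf 0)=\nabla V(\mathbf 0)$, and strict orthogonality at $\mathbf 0$ forces $|\nabla V(\mathbf 0)|^{2}=0$, hence $\nabla V(\mathbf 0)=\mathbf u(\mathbf 0)=\mathbf 0$. Since $\nabla\cdot\mathbf F=\mathrm{tr}(A)$ is constant, $\Delta V\equiv-\mathrm{tr}(A)$; set $P=\mathrm{Hess}\,V(\mathbf 0)$, which is symmetric with $\mathrm{tr}(P)=\Delta V(\mathbf 0)=-\mathrm{tr}(A)$, and $h(\mathbf x)=V(\mathbf x)-V(\mathbf 0)-\tfrac12\mathbf x^{T}P\mathbf x$, so that $h$ is harmonic with $h(\mathbf 0)=0$ and $\nabla h(\mathbf 0)=\mathbf 0$. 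Writing $H:=A+P$, we obtain $\mathbf u=\mathbf F+\nabla V=H\mathbf x+\nabla h$ with $\mathrm{tr}(H)=0$.

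The main obstacle is showing $h$ is a quadratic form, and I intend to handle it via Lemma~\ref{lem_p_g}. Substituting $\nabla V=P\mathbf x+\nabla h$ and $\mathbf u=H\mathbf x+\nabla h$ into $\mathbf u\cdot\nabla V\equiv 0$ and isolating $|\nabla h|^{2}$ produces an estimate of the form $|\nabla h(\mathbf x)|^{2}\le c_{1}\|\mathbf x\|\,|\nabla h(\mathbf x)|+c_{2}\|\mathbf x\|^{2}$; solving this quadratic inequality in $|\nabla h(\mathbf x)|$ yields a linear bound $|\nabla h(\mathbf x)|\le C\|\mathbf x\|$. As $\nabla h(\mathbf 0)=\mathbf 0$, integrating along the segment from $\mathbf 0$ to $\mathbf x$ gives $|h(\mathbf x)-h(\mathbf 0)|\le\tfrac{C}{2}\|\mathbf x\|^{2}$, so Lemma~\ref{lem_p_g} with $k=2$ shows $h$ is a polynomial of degree at most $2$; together with $h(\mathbf 0)=0$ and $\nabla h(\mathbf 0)=\mathbf 0$ this forces $h(\mathbf x)=\tfrac12\mathbf x^{T}M\mathbf x$ for a symmetric $M$, and harmonicity gives $\mathrm{tr}(M)=0$. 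Replacing $P$ by $P+M$ (still symmetric, still of trace $-\mathrm{tr}(A)$, and $\mathrm{tr}(A+P)=0$ still), we may assume $V(\mathbf x)=\tfrac12\mathbf x^{T}P\mathbf x+V(\mathbf 0)$ and $\mathbf u(\mathbf x)=H\mathbf x$ with $H=A+P$.

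With $V$ quadratic and $\mathbf u$ linear, Corollary~\ref{cor_so} applies (or directly: $\mathbf u\cdot\nabla V=\mathbf x^{T}H^{T}P\mathbf x\equiv 0$ iff the symmetric part of $H^{T}P$ vanishes, i.e. $H^{T}P+PH=O$); substituting $H=A+P$ and using $P^{T}=P$ converts $PH+H^{T}P=O$ into $2P^{2}+A^{T}P+PA=O$, which is equation~(\ref{lya_eqn}), with $H=A+P$ as asserted. I expect everything except the quadratic-growth/Lemma~\ref{lem_p_g} step to be routine matrix algebra; that step is precisely what excludes a strictly orthogonal HHD of $\mathbf F$ whose potential carries a genuinely higher-order (non-quadratic) harmonic part.
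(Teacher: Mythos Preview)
Your proof is correct and follows essentially the same route as the paper's: normalize at the origin, subtract off the quadratic part to get a harmonic remainder, use the strict orthogonality relation to bound its gradient linearly, integrate, and invoke Lemma~\ref{lem_p_g}. Two minor remarks: the paper first extracts $PH+H^{T}P=O$ at the Hessian level (via Corollary~\ref{cor_so}) and uses it to obtain the exact identity $\|\nabla\tilde V+B\mathbf x\|=\|B\mathbf x\|$ with $B=(P+H)/2$, whereas your quadratic inequality in $|\nabla h|$ achieves the same linear bound more directly; and since you chose $P=\mathrm{Hess}\,V(\mathbf 0)$, the Hessian of $h$ at $\mathbf 0$ also vanishes, so your matrix $M$ is automatically zero and the ``replace $P$ by $P+M$'' step is unnecessary (though harmless).
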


\begin{proof}
Let a strictly orthogonal HHD for ${\bf F}$ be given by
\begin{equation}\label{eqn_HHD_pf}
{\bf F} =- \nabla V+ {\bf u}.
\end{equation}
By Lemma \ref{HHD_mat}, we have the HHD of matrix $A = -P + H$, where $P = \textbf{J}_{\nabla V}({\bf 0})$, $H=\textbf{J}_{\textbf{u}}({\bf 0})$ and ${\rm tr}(P) = -{\rm tr}(A)$. By Corollary \ref{cor_so}, $P$ and $H$ satisfy $ PH + H^T P= O.$

First we demonstrate that $\nabla V({\bf x}) = P {\bf x}$ and ${\bf u}({\bf x}) = H {\bf x}.$ Because $\nabla V({\bf 0})$ and ${\bf u}({\bf 0}) $ are orthogonal and $\textbf{F}(\textbf{0}) = \textbf{0}$, we have $\nabla V({\bf 0}) = {\bf u}({\bf 0}) = {\bf 0}.$ By subtracting $A {\bf x} = -P {\bf x} + H {\bf x}$ from equation (\ref{eqn_HHD_pf}), we have
\begin{equation}\label{harmonic_prop}
\begin{split}
0 &= -\left(\nabla V({\bf x}) - P{\bf x}\right) + \left( {\bf u}({\bf x}) - H {\bf x}\right)\\
 &= -\nabla \tilde V({\bf x}) + \tilde{\bf u}({\bf x}),
\end{split}
\end{equation}
where
\begin{eqnarray*}
\tilde V({\bf x}) &=& V({\bf x}) -\frac{1}{2} {\bf x}^T P{\bf x}\\
\tilde{\bf u}({\bf x}) &=& {\bf u}({\bf x}) - H {\bf x}.
\end{eqnarray*}
Because we have $\nabla \cdot \tilde{\bf u}({\bf x}) = 0$ and identity (\ref{harmonic_prop}),  $\tilde V$ is harmonic. Furthermore, by the definition of $\tilde V,$ the gradient and the Hessian matrix of $\tilde V$ vanish at $\textbf{0}.$

Next, we demonstrate that an estimate of the following form holds for all ${\bf x} \in \mathbb{R}^n$:
\begin{equation}
|\tilde V ({\bf x})- \tilde V({\bf 0})| \leq C ||{\bf x}||^2,
\end{equation}
where $C$ is a positive constant.

Indeed, using the strict orthogonality of the HHD, we calculate as follows:
\begin{eqnarray*}
0 &=& {\bf u} ({\bf x}) \cdot \nabla V ({\bf x})\\
&=&\left( \tilde{\bf u}({\bf x}) + H {\bf x}\right)\cdot\left(\nabla\tilde V({\bf x})+ P {\bf x}\right)\\
&=&\left( \nabla\tilde V({\bf x})+ H {\bf x}\right)\cdot\left(\nabla\tilde V({\bf x})+ P {\bf x}\right)\\
&=& || \nabla\tilde V({\bf x})||^2 + \left(P {\bf x} + H {\bf x}\right)\cdot \nabla\tilde V({\bf x}).
\end{eqnarray*}
In the last line, we used $ PH + H^T P= O$ to eliminate $\left(P {\bf x}\right)\cdot\left(H {\bf x}\right).$ By introducing $B := (P+H)/2,$ we obtain
$$||\nabla\tilde V({\bf x}) + B {\bf x} || = ||B {\bf x}||.$$
Therefore we have
\[
|| \nabla\tilde V({\bf x}) || \leq ||\nabla\tilde V({\bf x}) + B {\bf x} || + ||B {\bf x}|| = 2 ||B {\bf x}|| \leq C ||{\bf x}||,
\]
for some positive constant $C.$

From this inequality, we obtain the desired estimate as follows:
\begin{align*}
\left|\tilde V({\bf x}) - \tilde V({\bf 0})\right| &= \left|\int_{0}^{1} \frac{d}{ds} \tilde V(s {\bf x}) ds \right|\\
&= \left|\int_{0}^{1}{\bf x}\cdot \nabla \tilde V(s {\bf x}) ds \right|\\
&\leq \int_{0}^{1}||{\bf x}||\cdot|| \nabla \tilde V(s {\bf x}) ||ds\\
&\leq \frac{C}{2} || {\bf x}||^2.
\end{align*}

By Lemma \ref{lem_p_g}, $\tilde V$ is a polynomial of degree 2 at he most. Here, the gradient and the Hessian matrix of $\tilde V$ vanish at $\textbf{0}.$ Therefore, $\tilde V$ is constant. This shows that $\nabla V({\bf x}) = P {\bf x}$ and ${\bf u}({\bf x}) = H {\bf x}.$

By substituting $H = A +P$ to $ PH + H^T P= O,$ we obtain equation (\ref{lyapunov_eqn}).

The converse follows immediately from Corollary \ref{cor_so}.
\end{proof}

Equation (\ref{lyapunov_eqn}) is a special case of the algebraic Riccati equation, and it is not straightforward to solve exactly although numerical methods have been proposed. In particular, the solutions are not necessarily unique. Details can be found in \cite{riccati}.

If $A$ is normal, there is an obvious solution to equation (\ref{lyapunov_eqn}), which is given by
$$ P = -\frac{1}{2} (A + A^T).$$
Thus we obtain the following theorem:
\begin{theorem}[Theorem \ref{thm_norm}]
Let ${\bf F}({\bf x})= A {\bf x}$ be a vector field on $\mathbb{R}^n$, where $A$ is a normal $n\times n$ rectangular matrix.  Then there is a strictly orthogonal HHD of ${\bf F}$.
\end{theorem}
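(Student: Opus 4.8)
The plan is to invoke the converse direction of Theorem~\ref{thm_ch}: it suffices to exhibit a symmetric matrix $P$ with ${\rm tr}(P) = -{\rm tr}(A)$ solving the algebraic Riccati equation $2P^2 + A^T P + PA = O$, for then $A = -P + H$ with $H = A + P$ is automatically a strictly orthogonal HHD. The natural candidate, already used above to establish the existence of an HHD for linear vector fields (the $P$ in the Corollary following Lemma~\ref{HHD_mat}), is the negative symmetric part $P = -\frac{1}{2}(A + A^T)$.

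First I would dispatch the two linear conditions. Symmetry of $P$ is immediate from $(A + A^T)^T = A^T + A$, and for the trace, ${\rm tr}(P) = -\frac{1}{2}\bigl({\rm tr}(A) + {\rm tr}(A^T)\bigr) = -{\rm tr}(A)$ since ${\rm tr}(A^T) = {\rm tr}(A)$.

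The only substantive step is verifying equation (\ref{lyapunov_eqn}). Substituting $P = -\frac{1}{2}(A + A^T)$ gives $2P^2 = \frac{1}{2}(A + A^T)^2$, $A^T P = -\frac{1}{2}\bigl(A^T A + (A^T)^2\bigr)$ and $PA = -\frac{1}{2}\bigl(A^2 + A^T A\bigr)$; adding these and cancelling the terms $A^2$, $(A^T)^2$, and the two copies of $A^T A$, one finds $2P^2 + A^T P + PA = \frac{1}{2}\bigl(A A^T - A^T A\bigr)$. Hence (\ref{lyapunov_eqn}) holds exactly when $A$ commutes with $A^T$, i.e. when $A$ is normal, which is the hypothesis. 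Applying the converse part of Theorem~\ref{thm_ch} then yields the desired strictly orthogonal HHD, with potential function $V({\bf x}) = \frac{1}{2}{\bf x}^T P {\bf x} = -\frac{1}{4}{\bf x}^T (A + A^T){\bf x}$.

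There is essentially no obstacle once Theorem~\ref{thm_ch} is available: the entire content is the algebraic identity $2P^2 + A^T P + PA = \frac{1}{2}(A A^T - A^T A)$ for $P$ the symmetric part, whose right-hand side vanishes precisely for normal $A$. An alternative but longer route would invoke the real canonical form of a normal matrix — orthogonal similarity to a block-diagonal matrix with $1\times 1$ blocks and $2\times 2$ scaled-rotation blocks — and combine it with Corollary~\ref{cor_pres} to reduce to the one- and two-dimensional cases; the direct computation above is shorter and moreover produces $P$ explicitly.
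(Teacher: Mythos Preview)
Your proof is correct and follows exactly the paper's approach: the paper simply asserts that $P = -\tfrac{1}{2}(A + A^T)$ solves (\ref{lyapunov_eqn}) when $A$ is normal and invokes Theorem~\ref{thm_ch}, while you supply the explicit verification $2P^2 + A^T P + PA = \tfrac{1}{2}(AA^T - A^TA)$ that the paper defers to the subsequent Remark. Your algebra is correct, and the additional observation that this identity also yields the converse (normality is \emph{necessary} for this particular $P$ to work) matches the paper's Remark.
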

\begin{remark}
\normalfont
The condition on the normality is necessary for the choice of decomposition $P = -\frac{1}{2} (A + A^T)$ to be applicable.
In fact, by substituting $P = -\frac{1}{2} (A + A^T)$ to equation (\ref{lyapunov_eqn}), we observe that the decomposition given by $ P = -\frac{1}{2} (A + A^T)$ is strictly orthogonal if and only if $A$ is normal.
\end{remark}
\begin{example}
\normalfont
Let ${\bf F}({\bf x})= A {\bf x}$ be a vector field, where $A$ is given by
$$\left(
\begin{array}{ccc}
3 & 0 & -4\\
0 & -1 & 0\\
4 & 0 & 3
\end{array}
\right),$$
which is normal.
Then, a strictly orthogonal HHD may be obtained by setting
$$ P = -\frac{1}{2} (A + A^T) = \left(
\begin{array}{ccc}
-3 & 0 & 0\\
0 & 1 & 0\\
0 & 0 & -3
\end{array}
\right).$$
It is noteworthy that the origin is a saddle in this example. Therefore $V({\bf x}) = {\bf x}^T P {\bf x}/ 2$ cannot be a Lyapunov function, although it decreases along the solutions.
\end{example}
This type of decomposition is considered in \cite{simple_dec}, although the treatment is different from that described herein. Because the condition of normality is highly restrictive, an explicit formula would be useful for more general cases. This is feasible, at the least, for two-dimensional systems.
\begin{proposition}\label{2d_lin_ex}
Let ${\bf F}({\bf x})= A {\bf x}$ be a vector field, where \[
A = \left(
\begin{array}{cc}
a & b \\
c & d
\end{array}
\right)\in {\rm Mat}(2, \mathbb{R}).
\] Then, there exists a strictly orthogonal HHD for ${\bf F}$. Furthermore, if $a+d$ and $b-c$ are non-zero, it is given by the following formula:

\[
P = -\left(
\begin{array}{cc}
\alpha & \beta \\
\beta & a + d - \alpha
\end{array}
\right),
\]
where
\[\begin{split}
\alpha &= \frac{a+d}{(a+d)^2+(b-c)^2} \left( a(a+d)-c(b-c)\right),\\
\beta &= \frac{a+d}{(a+d)^2+(b-c)^2} \left( c(a+d)+a(b-c)\right).
\end{split}\]
\end{proposition}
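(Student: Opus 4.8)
The plan is to read the statement through Theorem~\ref{thm_ch}: a strictly orthogonal HHD of $\mathbf{F}=A\mathbf{x}$ exists exactly when there is a symmetric matrix $P$ with $\mathrm{tr}(P)=-\mathrm{tr}(A)=-(a+d)$ solving the Riccati equation $2P^{2}+A^{T}P+PA=O$, the solenoidal part being $H=A+P$. So the proposition reduces to producing such a $P$ and, when $a+d\neq0$, identifying it with the displayed matrix. I would first dispose of the degenerate case $a+d=0$ trivially: there $P=O$ has trace $0=-\mathrm{tr}(A)$ and solves the Riccati equation, $H=A$ has zero trace, and $PH+H^{T}P=O$, so by Corollary~\ref{cor_so} the decomposition $A\mathbf{x}=-\nabla V+A\mathbf{x}$ with $V\equiv 0$ is already a (degenerate) strictly orthogonal HHD. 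Hence existence holds whenever $a+d=0$, and from now on I assume $a+d\neq0$.

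For $a+d\neq0$ I would build $P$ by hand. Writing $P=\begin{pmatrix}p&q\\q&r\end{pmatrix}$ and imposing $p+r=-(a+d)$ leaves $p,q$ as the free unknowns (with $r$ determined). Since $(P^{2})_{12}=q(p+r)=-(a+d)q$ and $(P^{2})_{11}-(P^{2})_{22}=(p-r)(p+r)=-(a+d)(p-r)$, two of the three scalar components of $2P^{2}+A^{T}P+PA=O$ are \emph{linear} in $p,q$ after $r$ is eliminated: the $(1,2)$-component gives $p(b-c)-(a+d)q=(a+d)c$, and the difference of the diagonal components gives $(a+d)p+(b-c)q=-a(a+d)$. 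This $2\times2$ linear system has determinant $(a+d)^{2}+(b-c)^{2}\neq0$, and Cramer's rule yields $p=-\alpha$, $q=-\beta$ with $\alpha,\beta$ exactly as stated; then $r=-(a+d)-p=-(a+d-\alpha)$, so $P$ is the displayed matrix, and one takes $H=A+P$.

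The step I expect to be the main obstacle is showing that the remaining (third) scalar component of the Riccati equation --- the sum of the diagonal entries, i.e.\ $\mathrm{tr}\!\left(2P^{2}+A^{T}P+PA\right)=2\bigl(\mathrm{tr}(P^{2})+\mathrm{tr}(PA)\bigr)=0$ --- is then satisfied automatically, so that $2P^{2}+A^{T}P+PA$ genuinely vanishes. After clearing the denominator $D:=(a+d)^{2}+(b-c)^{2}$ this is a polynomial identity in $a,b,c,d$. I would keep the bookkeeping tractable by writing $P=-\tfrac{a+d}{D}\widehat{P}$ with
\[
\widehat{P}=\begin{pmatrix} a(a+d)-c(b-c) & c(a+d)+a(b-c)\\ c(a+d)+a(b-c) & d(a+d)+b(b-c)\end{pmatrix},
\]
which satisfies $\mathrm{tr}(\widehat{P})=(a+d)^{2}+(b-c)^{2}=D$ (encoding the trace condition), so that the Riccati equation becomes the identity $2(a+d)\,\widehat{P}^{\,2}=D\bigl(A^{T}\widehat{P}+\widehat{P}A\bigr)$, to be checked entrywise. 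Conceptually, the reason such a $P$ always exists when $a+d\neq0$ is that the Riccati equation is equivalent to $P(A+P)$ being antisymmetric; when $P$ is invertible this forces $A+P=\lambda P^{-1}J$ for some $\lambda\in\mathbb{R}$ with $J=\begin{pmatrix}0&-1\\1&0\end{pmatrix}$, and solving $A=\lambda P^{-1}J-P$ for $(P,\lambda)$ recovers the same $P$ --- the traceless-ness of $\lambda P^{-1}J$ making the trace condition automatic, and the polynomial check above covering the remaining (singular-$P$) values of $A$.
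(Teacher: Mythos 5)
Your proposal is correct and follows essentially the same route as the paper: the paper also parametrizes the unknown symmetric part using the trace constraint, observes that the off-diagonal component and the difference of the diagonal components of the orthogonality condition $G(A-G)+\bigl(G(A-G)\bigr)^{T}=O$ (equivalently, your Riccati equation) are linear with determinant $(a+d)^{2}+(b-c)^{2}$, solves by elimination, and leaves the remaining quadratic component as a routine verification. The only (harmless) differences are cosmetic: the paper treats $b-c=0$ as a separate easy case rather than noting the formula already covers it, and it works with $G=-P$ and the condition of Corollary~\ref{cor_so} rather than directly with equation~(\ref{lya_eqn}).
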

\begin{proof}
If $b-c = 0,$ we may take $P = -A$ and $H=O.$ Similarly if $a+d =0,$ we may take $P=O$ and $H =A.$

Let us assume that $a+d$ and $b-c$ are non-zero. Let us consider
\[
G =-P = \left(
\begin{array}{cc}
\alpha & \beta \\
\beta & a + d - \alpha
\end{array}
\right),
\]
where $\alpha$ and $\beta$ are unknown.
First, we calculate
\begin{eqnarray*}
&&G( A - G) =\\
&&\left(
\begin{array}{cc}
a\alpha+c \beta -\alpha^2 -\beta^2 & b \alpha - a\beta \\
a\beta-c\alpha -(a+d) \beta +c(a+d) & -\alpha^2 -\beta^2 +(2a+d) \alpha+b \beta -a(a+d)
\end{array}
\right).
\end{eqnarray*}
Because the condition of strict orthogonality (\ref{lya_eqn}) is equivalent to
\[ G(A-G) + \left(G (A-G)\right)^T = O,\]
we obtain the equations
\begin{align}
a\alpha+c \beta -\alpha^2 -\beta^2 &= 0, \label{eqn1} \\
-\alpha^2 -\beta^2 +(2a+d) \alpha+b \beta -a(a+d) &= 0, \label{eqn2} \\
(b-c)\alpha -(a+d)\beta + c(a+d) &=0, \label{eqn3}
\end{align}
by calculating each entry of the matrix in the left-hand side.
Subtracting (\ref{eqn1}) from (\ref{eqn2}), we obtain
\begin{eqnarray*}
(b-c)\alpha -(a+d)\beta + c(a+d) &=&0, \\
(a+d)\alpha + (b-c)\beta - a(a+d) &=&0.
\end{eqnarray*}
The solution for this system of equations is given by
\begin{eqnarray*}
\alpha &=& \frac{a+d}{(a+d)^2+(b-c)^2} \left( a(a+d)-c(b-c)\right),\\
\beta &=& \frac{a+d}{(a+d)^2+(b-c)^2} \left( c(a+d)+a(b-c)\right).
\end{eqnarray*}
It can be verified that these satisfy the condition (\ref{eqn3}).
\end{proof}
From the construction of the decomposition and Theorem \ref{thm_ch}, we obtain the following result.
\begin{corollary}
If the matrix given by
\[
A = \left(
\begin{array}{cc}
a & b \\
c & d
\end{array}
\right)\in {\rm Mat}(2, \mathbb{R}).
\]
satisfies the conditions $a + d \neq 0$ and $b -c \neq 0,$ then the strictly orthogonal HHD of ${\bf F}({\bf x})= A {\bf x}$ exists uniquely.
\end{corollary}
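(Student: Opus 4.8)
The plan is to obtain both halves of the statement essentially for free from results already in hand. Existence is immediate from Proposition \ref{2d_lin_ex}, which exhibits a strictly orthogonal HHD of ${\bf F}$; under the extra hypotheses $a+d\neq 0$ and $b-c\neq 0$ the matrix $P$ there is given by the explicit formula displayed in that proposition (well defined since $(a+d)^2+(b-c)^2\neq 0$). So the only real work is uniqueness, and the strategy is to show that the matrix $P$ appearing in any strictly orthogonal HHD is forced.

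For this I would start from Theorem \ref{thm_ch}: any strictly orthogonal HHD of ${\bf F}({\bf x})=A{\bf x}$ must be of the form $A{\bf x}=-P{\bf x}+H{\bf x}$ with $P$ symmetric, ${\rm tr}(P)=-{\rm tr}(A)=-(a+d)$, $H=A+P$, and $P$ a solution of the Riccati equation (\ref{lya_eqn}); in particular the decomposition is completely determined by $P$. Setting $G=-P$, the trace condition forces
\[
G=\left(
\begin{array}{cc}
\alpha & \beta\\
\beta & a+d-\alpha
\end{array}
\right)
\]
for some $\alpha,\beta\in\mathbb{R}$, and, just as in the proof of Proposition \ref{2d_lin_ex}, equation (\ref{lya_eqn}) for $P$ is equivalent to the system (\ref{eqn1})--(\ref{eqn3}) in $(\alpha,\beta)$. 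Thus uniqueness of the HHD reduces to uniqueness of the solution $(\alpha,\beta)$ of that system.

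The key step is then the linear-algebra observation already used in the proof of Proposition \ref{2d_lin_ex}: any solution of (\ref{eqn1})--(\ref{eqn3}) must in particular satisfy the two linear equations
\[
\begin{aligned}
(b-c)\alpha-(a+d)\beta+c(a+d)&=0,\\
(a+d)\alpha+(b-c)\beta-a(a+d)&=0,
\end{aligned}
\]
the first being (\ref{eqn3}) and the second being obtained by subtracting (\ref{eqn1}) from (\ref{eqn2}); the coefficient matrix of this $2\times2$ system has determinant $(a+d)^2+(b-c)^2$, which is nonzero because $a+d\neq 0$. Hence $(\alpha,\beta)$ is uniquely determined — it is exactly the pair displayed in Proposition \ref{2d_lin_ex} — so $P$, and with it $H=A+P$ and the whole decomposition, is unique. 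Combined with existence this gives the statement. I do not expect a genuine obstacle here: the nonlinear terms of the Riccati system are needed only to extract the two linear relations above, and the remaining content is the invertibility of an explicit matrix. The only point meriting a careful sentence is the reduction of ``uniqueness of the HHD'' to ``uniqueness of $P$'', which is precisely what Theorem \ref{thm_ch} provides, the potential $V$ then being determined up to an additive constant.
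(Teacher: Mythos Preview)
Your argument is correct and is exactly the one the paper has in mind: the paper's proof is the single sentence ``From the construction of the decomposition and Theorem \ref{thm_ch}, we obtain the following result,'' and what you have written is precisely the unpacking of that sentence --- existence from Proposition \ref{2d_lin_ex}, and uniqueness from Theorem \ref{thm_ch} together with the fact, extracted in the proof of Proposition \ref{2d_lin_ex}, that the Riccati system forces the linear pair with coefficient determinant $(a+d)^2+(b-c)^2\neq 0$.
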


\begin{example}\label{ex_vdp}
\normalfont
Let ${\bf F}({\bf x})= A_{\mu} {\bf x}$ be a vector field, where $A_{\mu}$ is given by
$$\left(
\begin{array}{cc}
0 & 1 \\
-1 & \mu\\
\end{array}
\right),$$
where $\mu$ is a parameter. This is obtained by considering the linearization matrix of the Van der Pol oscillator in a two-dimensional form at the origin.

For this matrix, $\alpha$ and $\beta$ are as follows:
\begin{eqnarray}
\alpha &=& \frac{2 \mu}{\mu^2+4},\\
\beta &=& -\frac{\mu^2}{\mu^2+4}.
\end{eqnarray}
Therefore we have
$$P= -\frac{\mu}{\mu^2+4}\left(
\begin{array}{cc}
2 & -\mu \\
-\mu & \mu^2 + 2
\end{array}
\right).$$
Subsequently, we can obtain
$$H= A_{\mu} +P =\frac{1}{\mu^2+4}\left(
\begin{array}{cc}
-2\mu & 2 \mu^2 + 4\\
- 4 & 2\mu
\end{array}
\right).$$

\end{example}

The construction given above is for linear systems; however it can be applied to analyze general nonlinear vector fields. We remark that it is necessary for a general nonlinear vector field to have a strictly orthogonal HHD that its linear part has one. This follows from the next proposition, whose proof is similar to Lemma \ref{HHD_mat}:
\begin{proposition}
Let ${\bf F}$ be a vector field, not necessarily linear, and ${\bf F} =- \nabla V+ {\bf u} $ be a strictly orthogonal HHD. If ${\bf F}({\bf 0}) = {\bf 0}$, then $ {\bf J}_{\textbf{F}}({\bf 0}) = -\textbf{J}_{\nabla V}({\bf 0}) + \textbf{J}_{\bf u}({\bf 0})$ gives a strictly orthogonal HHD of matrices.
\end{proposition}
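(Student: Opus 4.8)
The plan is to differentiate the defining identities at the origin, exactly as in the proof of Lemma~\ref{HHD_mat} and Corollary~\ref{cor_so}; the only extra ingredient is to check that both components of the decomposition vanish at ${\bf 0}$, so that their Jacobians there really are the linear parts of their Taylor expansions. Throughout I would tacitly assume ${\bf F}$ is $C^1$ near ${\bf 0}$, which is needed for ${\bf u} = {\bf F} + \nabla V$ to be differentiable there and hence for the Jacobians in the statement to make sense.

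First I would record the matrix identities. Set $A = {\bf J}_{\bf F}({\bf 0})$, $P = {\bf J}_{\nabla V}({\bf 0})$ and $H = {\bf J}_{\bf u}({\bf 0})$. Since $\nabla V$ is a gradient, $P$ is the Hessian of $V$ at ${\bf 0}$, hence symmetric; since $\nabla\cdot{\bf u}=0$, taking traces gives ${\rm tr}(H) = (\nabla\cdot{\bf u})({\bf 0}) = 0$; and differentiating ${\bf F} = -\nabla V + {\bf u}$ at ${\bf 0}$ gives $A = -P + H$, whence ${\rm tr}(P) = -{\rm tr}(A)$. Thus $(P,H)$ is an HHD of the matrix $A$ in the sense of the definition of HHD of matrices. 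This part is verbatim the computation in Lemma~\ref{HHD_mat}.

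Next I would extract the orthogonality relation. From ${\bf F}({\bf 0}) = {\bf 0}$ we get $\nabla V({\bf 0}) = {\bf u}({\bf 0})$, and strict orthogonality evaluated at ${\bf 0}$ gives $\|\nabla V({\bf 0})\|^2 = \nabla V({\bf 0})\cdot{\bf u}({\bf 0}) = 0$, so $\nabla V({\bf 0}) = {\bf u}({\bf 0}) = {\bf 0}$. Consequently we may Taylor-expand $\nabla V({\bf x}) = P{\bf x} + o(\|{\bf x}\|)$ and ${\bf u}({\bf x}) = H{\bf x} + o(\|{\bf x}\|)$. Feeding these into the identity ${\bf u}({\bf x})\cdot\nabla V({\bf x}) \equiv 0$ yields ${\bf x}^{T} H^{T} P\,{\bf x} + o(\|{\bf x}\|^2) = 0$; replacing ${\bf x}$ by $t{\bf x}$, dividing by $t^2$ and letting $t\to 0$ gives ${\bf x}^{T} H^{T} P\,{\bf x} = 0$ for all ${\bf x}\in\mathbb{R}^n$, which is equivalent to the vanishing of the symmetric part of $H^{T}P$, i.e.\ $PH + H^{T}P = O$ (using that $P$ is symmetric). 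By Corollary~\ref{cor_so}, or directly by the definition, $A = -P + H$ is then a strictly orthogonal HHD of matrices, which is the claim.

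There is no genuine obstacle here; the only point requiring a little care is the pointwise vanishing $\nabla V({\bf 0}) = {\bf u}({\bf 0}) = {\bf 0}$, which is what lets the $o(\|{\bf x}\|^2)$ bookkeeping close, together with the implicit smoothness hypothesis on ${\bf F}$ that is needed for the Jacobians to exist in the first place.
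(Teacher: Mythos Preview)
Your proof is correct and follows exactly the approach the paper intends: the paper states that the argument is ``similar to Lemma~\ref{HHD_mat}'' and Corollary~\ref{cor_so}, and your write-up is precisely a careful execution of that, including the observation (also used in the proof of Theorem~\ref{thm_ch}) that ${\bf F}({\bf 0})={\bf 0}$ together with strict orthogonality forces $\nabla V({\bf 0})={\bf u}({\bf 0})={\bf 0}$. There is nothing to add.
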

Therefore, provided that the linear approximation is valid, we may obtain a Lyapunov function of the origin based on the construction of the HHD presented above. Next, we illustrate this by an analysis of the Van der Pol oscillator.
\begin{example}
\normalfont
Let us consider the system of ordinary equations of the Van der Pol oscillator:
\begin{equation}\label{eqn_vdp}
\begin{split}
\frac{d x}{d t} &= y,\\
\frac{d y}{d t} &= \mu (1- x^2)y -x,
\end{split}
\end{equation}
where $\mu \in \mathbb{R}$ is a parameter. Figure \ref{vdp_pp} shows the phase portrait of the system of equations (\ref{eqn_vdp}).

\begin{figure}[htp]
\begin{center}
\includegraphics[width=8cm]{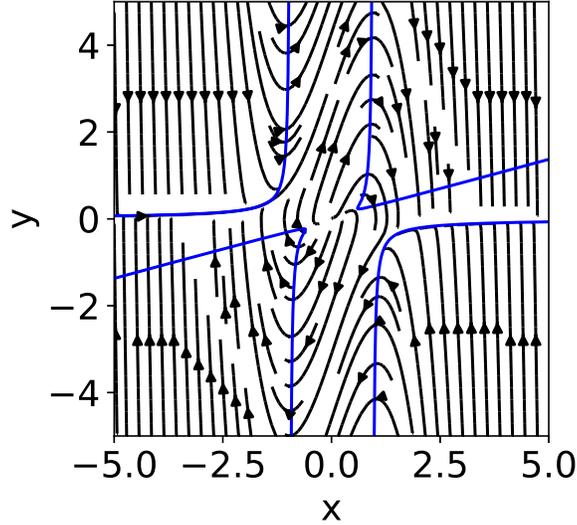}
\caption{Phase portrait of the system of equations (\ref{eqn_vdp}) for $\mu = 3.$ The set $\{ {\bf x} \mid \dot W({\bf x})=0\}$ is plotted for comparison with Figure \ref{vdp_level}.} \label{vdp_pp}
\end{center}
\end{figure}

Let us consider the linear approximation around the origin and apply the HHD to the linear part of the vector field.

First we apply the HHD to the linear part. This is considered in in Example \ref{ex_vdp}, and we obtain a function $W({\bf x}) := \frac{1}{2}{\bf x}^T P {\bf x},$ where $P$ is the matrix constructed therein. For $\mu <0,$ the origin is a sink equilibrium point. In this case, $W$ is a Lyapunov function of the origin.

Next, let us consider the case $\mu >0,$ where the origin is a source equilibrium point. 
Because $P$ is negative definite, the set $\{ {\bf x} \mid W({\bf x})>\lambda\}$ is a filled ellipse if $\lambda < 0$. By considering the linear part, we observe that $\dot W < 0$ holds in a neighborhood of the origin.

Although the stability of the equilibrium is a prerequisite for the definition of the Lyapunov function, a similar analysis can be performed to estimate the basin of repulsion. As long as the condition $\{ {\bf x} \mid W({\bf x})>\lambda\} \subset \{ {\bf x} \mid \dot W({\bf x})<0\}$ holds, $\{ {\bf x} \mid W({\bf x})>\lambda\}$ will be contained in the basin of repulsion. This is established by changing the time parameter to $-t$. With this change, the origin is a sink equilibrium point and $W$ is a Lyapunov function. Figure \ref{vdp_level} shows some level sets of $W$ and the set $\{ {\bf x} \mid \dot W({\bf x})=0\}.$

\begin{figure}[htp]
\begin{center}
\includegraphics[width=8cm]{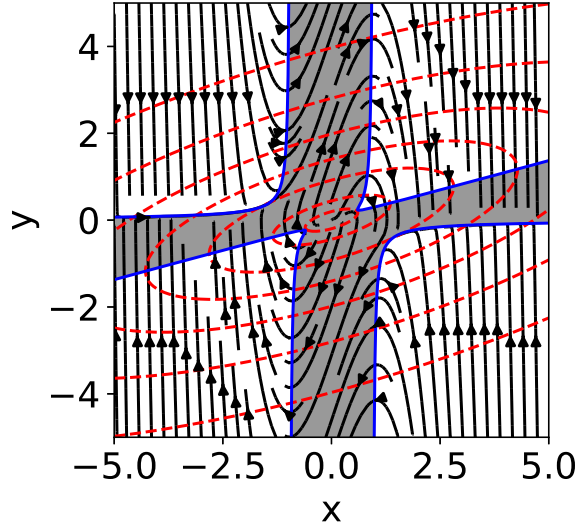}
\caption{Level sets of $W$ and the set $\{ {\bf x} \mid \dot W({\bf x})=0\}$ for $\mu = 3,$  denoted in dashed red lines and blue line, respectively. In the shaded domain, $\dot W$ is positive.} \label{vdp_level}
\end{center}
\end{figure}

Let us obtain further information regarding the behavior of the system by examining the function $W.$
We observe in the phase portrait that there is a ``singular'' orbit $y = y_s(x)$ $(x >1)$ near the level set $\dot W = 0$ (Figure \ref{vdp_pp}), where the orbits from above or below are acutely bent. Below we establish that $y_s$ is estimated by
\[
\frac{x}{\mu(1-x^2)} < y_s(x) < \frac{-2 \mu x + \mu x^3 - x \sqrt{-4 + 8 x^2 + \mu^2 x^4}}{2 \left( (2+ \mu^2) x^2 -(\mu^2+1)\right)},
\]
for $x >1.$

Let us consider the domain $D$ defined by
\[
 D = \{(x,y) \mid x > c_0\text{ and } y < \gamma(x)\}
\]
where 
\[
\begin{split}
c_0 &:=  \sqrt{\frac{1+\mu^2}{2+\mu^2}},\\
\gamma(x) &:= \frac{-2 \mu x + \mu x^3 - x \sqrt{-4 + 8 x^2 + \mu^2 x^4}}{2 \left( (2+ \mu^2) x^2 -(\mu^2+1)\right)}.
\end{split}
\]
Here the curve of the boundary $y = \gamma(x)$ satisfies $\dot W(x, \gamma(x))=0.$ This condition implies that the vector field ${\bf F}({\bf x})$ is tangent to the level set of $W$ at each ${\bf x} \in \partial D.$ Further, if the vector ${\bf F}({\bf x})$ points inward (outward) at a point ${\bf x} \in \partial D$, the same holds for a neighborhood of ${\bf x}$ provided that the level sets of $W$ are transverse to $\partial D.$ Therefore we can determine the direction of each orbit passing $\partial D.$ 

 Additionally, $\lambda_0 < 0$ exists such that $W = \lambda_0$ is tangent to $\partial D$ at exactly one point ${\bf x}_c \in \partial D.$ $\partial D$ is separated into two parts by ${\bf x}_c$. By studying the direction of the vector field, we conclude that the orbits starting within this set cannot leave it from the upper part of $\partial D$.

We further restrict the possible motion of the orbits by considering the $y$-nullcline given by
$$ y = \frac{x}{\mu(1-x^2)}.$$
Because it is monotonous for $x > 1,$ the set 
\[
\{(x,y) \mid x > 1 \text{ and }y < \frac{x}{\mu(1-x^2)}\}
\] is backward invariant.

It can be shown that the curve $y = \gamma(x)$ lies above the $y$-nullcline if $\mu > 0$ and $x > 1.$ Therefore, the orbits entering $D$ from the upper edge remain in the narrow strip between these two curves. Figure \ref{vdp_level_detail} shows the curve $y = \gamma(x)$ and the $y$-nullcline.

\begin{figure}[htp]
\begin{center}
\includegraphics[width=8cm]{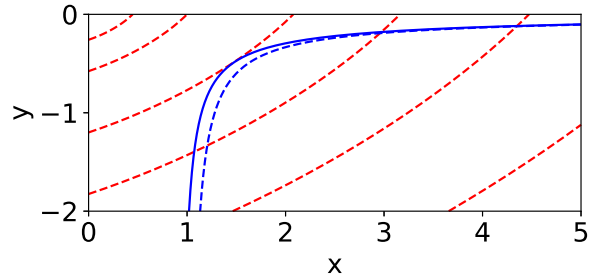}
\caption{Level sets of $W$, the curve $y = \gamma(x)$ and the $y$-nullcline for $\mu = 3,$ denoted in dashed red lines, blue line and dashed blue line, respectively. } \label{vdp_level_detail}
\end{center}
\end{figure}

Hence we observe that, between the curve $y = \gamma(x)$ and the $y$-nullcline, the direction of the vector field changes acutely (Figure \ref{vdp_pp_detail}). Therefore, the singular orbit is contained in this region, and we have established the desired estimate.
\begin{figure}[htbp]
\begin{center}
\includegraphics[width=8cm]{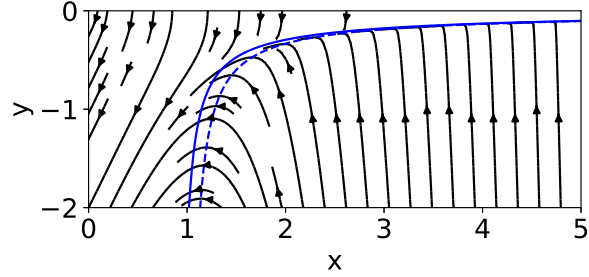}
\caption{Phase portrait of the system of equations (\ref{eqn_vdp}) around the curve $y = \gamma(x)$ and the $y$-nullcline.}\label{vdp_pp_detail}
\end{center}
\end{figure}

\begin{figure}[htbp]\label{vdp_pp_detail_pc}
\begin{center}
\includegraphics[width=8cm]{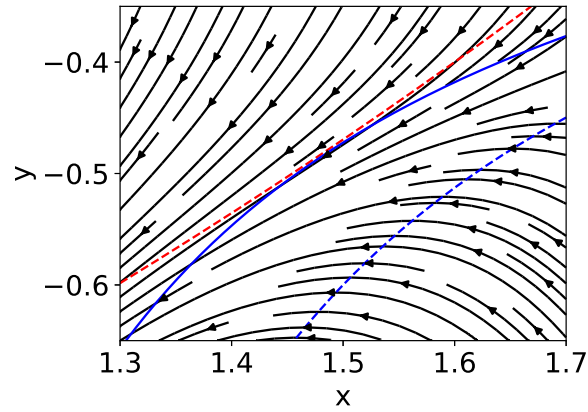}
\caption{Phase portrait of the system of equations (\ref{eqn_vdp}) around the point ${\bf x}_c$. Level sets of $W$, the curve $y = \gamma(x)$, and the $y$-nullcline are plotted.}
\end{center}
\end{figure}

\end{example}

At this point, let us discuss the relationship between HHD and SDE decomposition, which provides us with a method to calculate Lyapunov functions.

The SDE decomposition is a decomposition method of vector fields first introduced to construct potential functions for stochastic differential equations \cite{ao2004potential, kwon2005structure, yuan2017sde}. By setting the noise term to zero, it can be applied to ODE defined by vector fields. For a linear vector field ${\bf F}({\bf x}) = F {\bf x}$, it amounts to consider, for a prescribed symmetric matrix $D$, a decomposition of the form
\begin{equation}\label{SDED}
F = -(D + Q) U,
\end{equation}
where Q is skew-symmetric and U is symmetric \cite{kwon2005structure}.
Because $\rm{tr} (QU) = 0$ holds, SDE decomposition (\ref{SDED}) yields an HHD $F=-P+H$ by setting

\begin{equation}\label{SDED_HHD}
\begin{aligned}
P &= DU,\\
H &= -QU,
\end{aligned}
\end{equation}
if and only if $DU$ is symmetric, that is, $D$ commutes with $U$. In particular, a strictly orthogonal HHD is obtained if $D=I$.

Conversely, a strictly orthogonal HHD $F = -P +H$ with regular gradient part $P$ can be expressed as an SDE decomposition via (\ref{SDED_HHD}). Indeed, the matrices $D = I,$ $U=P$ and $Q = -H P^{-1}$ satisfy the required conditions. Thus, in the case of linear vector fields, SDE decomposition with $D=I$ and strictly orthogonal HHD are equivalent in the sense that they correspond via (\ref{SDED_HHD}).

Let us note that this equivalence is not trivial because we can construct an example of HHD without corresponding SDE decompositions. Therefore, the equivalence with SDE decomposition may be regarded as evidence that the strict orthogonality is not an artificial choice.
The next proposition states a necessary and sufficient condition for these two methods to yield an equivalent result.
\begin{proposition}
Let $D$ be a symmetric regular matrix, and $F = -P + H$ an HHD of a matrix.
There exists a pair of a skew-symmetric matrix $Q$ and a symmetric matrix $U$ satisfying (\ref{SDED_HHD}) if and only if $D$ commutes with $P$ and the following condition is satisfied for a skew-symmetric matrix $Q$:
\begin{equation}\label{SDED_HHD_equiv}
QP + HD = O.
\end{equation}
\end{proposition}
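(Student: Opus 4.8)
The plan is to use the regularity of $D$ to eliminate $U$ from the system $P = DU$, $H = -QU$, reducing the whole equivalence to elementary identities among $P$, $H$, $Q$, $D$. The one fact I will lean on repeatedly is that, for a regular symmetric $D$ and a symmetric $P$, the three statements ``$D$ commutes with $P$'', ``$D^{-1}$ commutes with $P$'', and ``$D^{-1}P$ is symmetric'' are mutually equivalent; this is what makes both directions transparent.

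For the forward direction, suppose a skew-symmetric $Q$ and a symmetric $U$ satisfy $P = DU$ and $H = -QU$. Since $D$ is regular, $U = D^{-1}P$ is forced, and the symmetry of $U$ together with that of $P$ and $D$ gives $D^{-1}P = (D^{-1}P)^{T} = PD^{-1}$; multiplying through by $D$ yields $DP = PD$, i.e.\ $D$ commutes with $P$. Substituting $U = D^{-1}P$ into $H = -QU$ and using $D^{-1}PD = P$ then gives $HD = -QUD = -QD^{-1}PD = -QP$, which is exactly $QP + HD = O$ for the same $Q$. Conversely, assume $DP = PD$ and that some skew-symmetric $Q$ satisfies $QP + HD = O$. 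Put $U := D^{-1}P$; the commutation relation is equivalent to $D^{-1}P = PD^{-1}$, so $U^{T} = PD^{-1} = D^{-1}P = U$ is symmetric, and plainly $DU = P$. Finally $-QU = -QD^{-1}P = -QPD^{-1} = HDD^{-1} = H$, using $QP = -HD$ and again $D^{-1}P = PD^{-1}$, so $(Q,U)$ realizes (\ref{SDED_HHD}).

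I do not expect a genuine obstacle here; the proof is bookkeeping with transposes and commutators, and the only subtlety is to keep the same matrix $Q$ throughout so that the clause ``for a skew-symmetric matrix $Q$'' in the statement is unambiguous, which the argument above does. It is also worth remarking in passing that the trace requirement $\operatorname{tr}(H) = 0$ built into the HHD is automatically consistent, since $\operatorname{tr}(QU) = 0$ whenever $Q$ is skew-symmetric and $U$ symmetric, so it adds nothing to the conditions.
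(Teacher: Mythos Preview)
Your argument is correct and coincides with the paper's own proof: the forward direction uses $U=D^{-1}P$ to obtain the commutation of $D$ and $P$ from the symmetry of $U$, and then verifies $QP+HD=O$ by substitution, while the converse simply defines $U:=D^{-1}P$. The paper records this more tersely (``verified by direct calculations'' and ``set $U:=D^{-1}P$''), but the content is identical.
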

\begin{proof}
Let $Q$ be a skew-symmetric matrix and $U$ a symmetric matrix satisfying (\ref{SDED_HHD}). Then, $U$ commutes with $D$ because $P$ is symmetric. Therefore, $D$ commutes with $P$. The condition (\ref{SDED_HHD_equiv}) is verified by direct calculations.

For the converse, set $U := D^{-1} P.$
\end{proof}

\begin{example}
\normalfont
Let us consider the next HHD of a matrix:
\[
\left( 
\begin{array}{ccc}
	-1 & 1 & 1 \\
	2 & -1 & 1	\\
	2 & 2 & -1
\end{array}
 \right)
 = -I+
\left( 
\begin{array}{ccc}
	0 & 1 & 1 \\
	2 & 0 & 1	\\
	2 & 2 & 0
\end{array}
 \right).
\]
Then, no skew-symmetric nonzero matrix satisfies the equation (\ref{SDED_HHD_equiv}) for any choice of $D$. Therefore, no SDE decomposition satisfies (\ref{SDED_HHD}).
\end{example}

\section{Analysis of planar systems}\label{a_pla_sys}
In this section, we consider the analysis of planar systems. For planar systems, the HHD assumes a simpler form that can be regarded as a generalization of the complex potential, which is an important tool in hydrodynamics. This complex potential formalism of the HHD enables us to construct the decomposition exactly if a vector field is given by analytic functions.

First, we observe that, in the planar case, a vector field ${\bf u}$ is integrable if $\nabla \cdot {\bf u} $ identically vanishes. That is, if we set
$$ J := \left( \begin{array}{cc}
0 & 1 \\
-1 & 0
\end{array} \right),$$
we have ${\bf u} = J \nabla H$ for some differentiable function $H$.

This motivates us to establish the following definition.

\begin{definition}
\normalfont
For a smooth vector field $ {\bf F}$ on $\mathbb{R}^2$, its HHD takes the following form:
\begin{equation}\label{ph_dec}
{\bf F}({\bf x}) = -\nabla V({\bf x}) + J\nabla H({\bf x}),
\end{equation}
where $V({\bf x})$ and $ H({\bf x})$ are scalar functions. We will call $V$ a {\bf potential function} and $H$ a {\bf Hamiltonian function}.
\end{definition}
\begin{remark}
\normalfont
In Demongeot, Grade and Forest, this type of HHD is called the potential-Hamiltonian decomposition \cite{Demongeot_Li_2007}.
\end{remark}

A combination of two functions appearing (\ref{ph_dec}) gives us a generalization of the complex potential, which is a useful tool in the analysis of two-dimensional flows in hydrodynamics.
\begin{definition}[Complex potential]
\normalfont
For a HHD given by ${\bf F}({\bf x}) = -\nabla V({\bf x}) + J\nabla H({\bf x})$, its corresponding {\bf complex potential} $W$ is defined by
\begin{equation}\label{comp}
W = 2 (-V + iH).
\end{equation}
If $W$ is a complex-valued function obtained by the procedure above, we say that $W$ is a complex potential of the planar vector field ${\bf F}.$
\end{definition}
Therefore, we may use complex potentials instead of the two functions appearing in the HHD.
In what follows, we use the next notation:
\begin{definition}[Wirtinger derivatives]
\normalfont
For a real or complex valued smooth function $\phi$ defined on $\mathbb{R}^2,$ we define
\begin{eqnarray*}
\frac{\partial \phi}{\partial z} &:=& \frac{1}{2} \left( \frac{\partial \phi}{\partial x} - i \frac{\partial \phi}{\partial y}\right),\\
\frac{\partial \phi}{\partial \bar z} &:=& \frac{1}{2} \left( \frac{\partial \phi}{\partial x} + i \frac{\partial \phi}{\partial y}\right).
\end{eqnarray*}
\end{definition}
It is easy to observe that the complex conjugate of $\frac{\partial \phi}{\partial z}$ is $\frac{\partial \bar\phi}{\partial \bar z}.$ Furthermore, it can be verified that the chain rule holds. Details on the Wirtinger calculus may be found in \cite{burckel2012theory}.

In the complex potential formalism, the system of ordinary equations is written in a simpler form.
\begin{proposition}
Let us assume that a system of ordinary equations is given by
\begin{eqnarray*}
\frac{d x}{d t} &=& f(x,y),\\
\frac{d y}{d t} &=& g(x,y),
\end{eqnarray*}
where $f$ and $g$ are smooth functions. If $W$ is a complex potential for a HHD of the vector field ${\bf F} (x,y) = (f(x,y), g(x,y))^T,$ then we have
\begin{equation}\label{comp_eq}
\frac{d \bar z}{ d t} = \frac{\partial W}{\partial z},
\end{equation}
where $z = x + iy.$
\end{proposition}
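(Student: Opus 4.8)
The plan is to verify the identity by a direct computation, expanding both sides in terms of the two real scalar functions $V$ and $H$ appearing in the HHD and then matching real and imaginary parts. There is no serious obstacle here; the only thing requiring care is the bookkeeping of signs produced by the matrix $J$ and by the factor $-i$ in the Wirtinger derivative. The structure of the argument will mirror the way the classical complex potential of hydrodynamics is related to the velocity field.

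First I would rewrite the left-hand side. Since $z = x + iy$ we have $\bar z = x - iy$, so
\[
\frac{d\bar z}{dt} = \dot x - i\dot y = f(x,y) - i\, g(x,y).
\]
Using the HHD ${\bf F} = -\nabla V + J\nabla H$ together with $J\nabla H = (H_y, -H_x)^T$ (a one-line computation from the definition of $J$), the components of ${\bf F}$ are $f = -V_x + H_y$ and $g = -V_y - H_x$. Substituting gives
\[
\frac{d\bar z}{dt} = (-V_x + H_y) - i(-V_y - H_x) = (-V_x + H_y) + i(V_y + H_x).
\]

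Next I would expand the right-hand side. From $W = 2(-V + iH)$ and the definition of $\partial/\partial z$,
\[
\frac{\partial W}{\partial z} = 2\left(-\frac{\partial V}{\partial z} + i\frac{\partial H}{\partial z}\right) = -(V_x - iV_y) + i(H_x - iH_y) = (-V_x + H_y) + i(V_y + H_x),
\]
where the last equality uses $i\cdot(-i) = 1$. Comparing this with the expression obtained for $\frac{d\bar z}{dt}$ shows the two coincide, which proves the proposition. As a consistency check, when $-V + iH$ happens to be holomorphic (the classical hydrodynamic case, $V$ and $H$ harmonic conjugates) one has $\partial W/\partial z = W'(z)$, and the identity reduces to the familiar formula relating the derivative of the complex potential to the (conjugate) velocity field.
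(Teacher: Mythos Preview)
Your proof is correct and follows essentially the same approach as the paper: a direct computation expanding the Wirtinger derivative of $W = 2(-V + iH)$ and matching it with $f - ig$ via the HHD relations $f = -V_x + H_y$, $g = -V_y - H_x$. The only cosmetic difference is that the paper runs the chain of equalities from $\partial W/\partial z$ down to $d\bar z/dt$ in one direction, whereas you expand both sides separately and compare.
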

\begin{proof}
This is verified by a direct calculation as follows:
\begin{eqnarray*}
\frac{\partial W}{\partial z} &=& \left( \frac{\partial}{\partial x} - i \frac{\partial}{\partial y}\right) (-V + iH)\\
&=& \left( -\frac{\partial V}{\partial x} + \frac{\partial H}{\partial y}\right) +i \left( \frac{\partial H}{\partial x} + \frac{\partial V}{\partial y}\right)\\
&=& f(x,y) - i g(x,y)\\
&=& \frac{d \bar z}{ d t}.
\end{eqnarray*}
\end{proof}
The next corollary follows from the property of the Wirtinger derivatives. It characterizes all the  possible decompositions.
\begin{corollary}
Let $W$ be a complex potential for a planar vector field ${\bf F}$ and $\phi(z)$ be a holomorphic function. Then, $W +\phi(\bar z)$ is also a complex potential for the vector field ${\bf F}$.

Conversely, let $W$ and $\tilde W$ be two complex potentials for the same vector field. Then $W - \tilde W = \phi(\bar z)$ for some holomorphic function $\phi(z).$ That is, the difference of two complex potentials is an antiholomorphic function.
\end{corollary}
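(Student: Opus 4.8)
The plan is to reduce both assertions to the single identity $\partial W/\partial z = f - i g$ (with $z = x + iy$ and ${\bf F}(x,y) = (f(x,y),g(x,y))^T$), which the proof of the preceding proposition already establishes for any complex potential $W$ of ${\bf F}$. The crucial observation is that this identity in fact \emph{characterizes} complex potentials: a $C^2$ complex-valued function $W$ on $\mathbb{R}^2$ is a complex potential of ${\bf F}$ if and only if $\partial W/\partial z = f - i g$.

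First I would prove this characterization. One direction is exactly the computation already carried out. For the converse, suppose $\partial W/\partial z = f - i g$ and set $V := -\frac{1}{2}\mathrm{Re}\,W$ and $H := \frac{1}{2}\mathrm{Im}\,W$, so that $W = 2(-V + iH)$. Expanding $\partial W/\partial z = \frac{1}{2}(\partial_x - i\partial_y)\,W$ into real and imaginary parts and comparing with $f - ig$ yields $f = -\partial_x V + \partial_y H$ and $g = -\partial_y V - \partial_x H$, that is, ${\bf F} = -\nabla V + J\nabla H$. Since $\nabla\cdot(J\nabla H) \equiv 0$ automatically and $V$ is as smooth as $W$, this is a genuine HHD, so $W$ is a complex potential of ${\bf F}$.

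Granting the characterization, the forward assertion follows from the Wirtinger chain rule: for holomorphic $\phi$ one has $\partial(\phi(\bar z))/\partial z = \phi'(\bar z)\cdot(\partial \bar z/\partial z) = 0$, hence $\partial(W + \phi(\bar z))/\partial z = \partial W/\partial z = f - ig$, so $W + \phi(\bar z)$ is again a complex potential of ${\bf F}$. For the converse assertion, if $W$ and $\tilde W$ are complex potentials of the same ${\bf F}$, then $\partial(W - \tilde W)/\partial z = (f - ig) - (f - ig) = 0$, and it remains only to identify the functions with vanishing $\partial/\partial z$. Writing $u := W - \tilde W$, the relation $\overline{\partial u/\partial z} = \partial\bar u/\partial\bar z$ (noted earlier in the excerpt) shows that $\bar u$ is holomorphic, say $\bar u = h(z)$; then $u = \overline{h(z)} = \phi(\bar z)$ with $\phi(w) := \overline{h(\bar w)}$, which is holomorphic. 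Hence $W - \tilde W = \phi(\bar z)$, as claimed.

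The argument is essentially formal, and the only step needing slight care is the last one, passing from $\partial u/\partial z \equiv 0$ to the representation $u = \phi(\bar z)$ with $\phi$ holomorphic. Here one uses that the domain is all of $\mathbb{R}^2 \cong \mathbb{C}$, so that ``$\bar u$ holomorphic'' has global content (and forces $u$ to be real-analytic, in particular smooth), together with the conjugation trick that converts the anti-holomorphic differential equation into an honest holomorphicity statement for $\bar u$. Over a more complicated planar domain one would obtain only a \emph{locally} anti-holomorphic difference, which would then have to be patched; no other part of the proof presents a genuine obstacle.
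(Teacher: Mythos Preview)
Your proof is correct and follows essentially the same approach as the paper, which merely states that the corollary ``follows from the property of the Wirtinger derivatives'' without further detail. Your explicit characterization of complex potentials via $\partial W/\partial z = f - ig$ and the conjugation trick to pass from $\partial u/\partial z = 0$ to $u = \phi(\bar z)$ fill in exactly the details the paper leaves implicit.
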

The notion of orbital derivatives is important in the theory of Lyapunov functions. For a complex potential, it assumes a simple form.
\begin{corollary}
If $W$ is a complex potential, then the following identity holds.
$$\dot W = \left|\frac{\partial W}{\partial z} \right|^2 + \frac{\partial W}{\partial \bar z}\frac{\partial W}{\partial z}$$
In particular, if $W$ is holomorphic, then we have
$$\dot W = \left|\frac{\partial W}{\partial z} \right|^2.$$
\end{corollary}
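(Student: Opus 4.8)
The plan is to obtain the identity by a direct computation of the orbital derivative using the chain rule for Wirtinger derivatives. Recall that for a complex-valued function $W$ the orbital derivative along a solution $t \mapsto (x(t), y(t))$ of the system is $\dot W = \frac{d}{dt} W\big(x(t), y(t)\big)$. First I would write this out in the $z$, $\bar z$ variables:
$$\dot W = \frac{\partial W}{\partial z}\frac{dz}{dt} + \frac{\partial W}{\partial \bar z}\frac{d\bar z}{dt}.$$

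Next I would feed in the equation of motion in its complex form. By the proposition establishing $\frac{d\bar z}{dt} = \frac{\partial W}{\partial z}$, and by taking complex conjugates together with the fact (noted just before in the text) that the conjugate of $\frac{\partial W}{\partial z}$ is $\frac{\partial \bar W}{\partial \bar z}$, one gets $\frac{dz}{dt} = \overline{\frac{\partial W}{\partial z}}$. Substituting both expressions,
$$\dot W = \frac{\partial W}{\partial z}\,\overline{\frac{\partial W}{\partial z}} + \frac{\partial W}{\partial \bar z}\,\frac{\partial W}{\partial z} = \left|\frac{\partial W}{\partial z}\right|^2 + \frac{\partial W}{\partial \bar z}\frac{\partial W}{\partial z},$$
which is exactly the claimed formula. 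The holomorphic case is then immediate: if $W$ is holomorphic then $\frac{\partial W}{\partial \bar z} = 0$, so the second term vanishes and $\dot W = \left|\frac{\partial W}{\partial z}\right|^2$.

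The only step that warrants any care — and the closest thing to an obstacle — is justifying the chain rule in Wirtinger form for a \emph{real} curve $t \mapsto (x(t), y(t))$ against a complex-valued $W$; this follows either by splitting $W$ into real and imaginary parts and applying the ordinary chain rule, or by invoking the Wirtinger calculus results already cited in the text. Everything else reduces to the single substitution above, so I expect the proof to be short.
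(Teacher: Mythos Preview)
Your proof is correct and follows essentially the same route as the paper: both compute $\dot W$ via the chain rule, rewrite it as $\frac{\partial W}{\partial z}\frac{dz}{dt} + \frac{\partial W}{\partial \bar z}\frac{d\bar z}{dt}$, and then substitute the equation of motion $\frac{d\bar z}{dt} = \frac{\partial W}{\partial z}$ and its conjugate. The only cosmetic difference is that the paper spells out the passage from the real-variable chain rule to the Wirtinger form explicitly, whereas you invoke it directly (and correctly note how to justify it).
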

\begin{proof}
This is verified by a direct calculation:
\begin{eqnarray*}
\dot W &=& \frac{\partial W}{\partial x}\frac{d x}{ d t} + \frac{\partial W}{\partial y}\frac{d y}{ d t}\\
&=& \frac{1}{2} \left\{ \left(\frac{\partial W}{\partial x} +i \frac{\partial W}{\partial y} \right) \left(\frac{d x}{ d t} -i \frac{d y}{ d t} \right)+ \left(\frac{\partial W}{\partial x} -i \frac{\partial W}{\partial y} \right) \left(\frac{d x}{ d t} +i \frac{d y}{ d t} \right)\right\}\\
&=& \frac{\partial W}{\partial \bar z}\frac{d \bar z}{ d t} +\frac{\partial W}{\partial z}\frac{d z}{ d t}\\
&=&\frac{\partial W}{\partial \bar z}\frac{\partial W}{\partial z} + \left|\frac{\partial W}{\partial z} \right|^2.
\end{eqnarray*}

\end{proof}
The preceding corollary implies that the vector fields with holomorphic or antiholomorphic complex potentials are easy to analyze because the motion of orbits can be tracked by considering the level sets of $V$ or $H.$ The antiholomorphic case is trivial because it corresponds to the case where ${\bf F}$ is identically zero. The next proposition gives us a characterization of holomorphic cases. Here we denote $\nabla \cdot (J\,{\bf F})$ by $\nabla \times {\bf F}.$

\begin{proposition}
Let ${\bf F}$ be a smooth vector field defined on $\mathbb{R}^2.$ Then we have
$\nabla \cdot {\bf F} = \nabla \times {\bf F} = 0$ if and only if there exists a holomorphic complex potential of ${\bf F}$.
\end{proposition}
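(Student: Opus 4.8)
The plan is to translate both conditions on $\mathbf{F}$ into statements about a complex potential and then invoke the preceding corollary characterizing the arbitrariness of $W$ (namely that two complex potentials differ by an antiholomorphic function). First I would recall that by Proposition on the complex form of the system, any complex potential $W$ satisfies $\partial W/\partial z = f - ig = \overline{\dot z}$, so the components of $\mathbf{F}$ are recovered from $W$. I would compute $\nabla\cdot\mathbf{F}$ and $\nabla\times\mathbf{F} = \nabla\cdot(J\mathbf{F})$ in terms of Wirtinger derivatives: writing $\mathbf{F} = (f,g)^T$, one has $\nabla\cdot\mathbf{F} + i\,\nabla\times\mathbf{F}$ (or a similar linear combination) equal to a constant multiple of $\partial(f-ig)/\partial\bar z = \partial^2 W/\partial z\,\partial\bar z = \Delta W/4$. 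Hence $\nabla\cdot\mathbf{F} = \nabla\times\mathbf{F} = 0$ is equivalent to $\partial^2 W/\partial z\,\partial\bar z = 0$, i.e. to $W$ being harmonic — but actually I want the sharper statement that $\partial W/\partial z$ is holomorphic.

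For the forward direction, suppose $\nabla\cdot\mathbf{F} = \nabla\times\mathbf{F} = 0$. Then for any complex potential $W$ we get $\partial/\partial\bar z\,(\partial W/\partial z) = 0$, so $g(z) := \partial W/\partial z$ is a holomorphic function of $z$. Choosing a holomorphic antiderivative $\Phi$ with $\Phi'(z) = g(z)$ (possible since $\mathbb{R}^2 = \mathbb{C}$ is simply connected), I claim $\Phi$ is itself a complex potential for $\mathbf{F}$: indeed $\partial\Phi/\partial z = \Phi'(z) = g(z) = f - ig = \overline{\dot z}$, which by the characterizing Proposition is exactly the condition for $\Phi$ to be a complex potential. (One must also check $\partial\Phi/\partial\bar z = 0$, which holds since $\Phi$ is holomorphic, so the HHD it encodes is genuine, with $V = -\mathrm{Re}\,\Phi/2$, $H = \mathrm{Im}\,\Phi/2$.) Thus $\mathbf{F}$ has a holomorphic complex potential.

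For the converse, suppose $\mathbf{F}$ admits a holomorphic complex potential $W = \Phi(z)$. Then $\partial W/\partial z = \Phi'(z)$ is holomorphic, so $\partial^2 W/\partial z\,\partial\bar z = 0$; unwinding the identity from the first paragraph gives $\nabla\cdot\mathbf{F} = \nabla\times\mathbf{F} = 0$. The one subtlety worth stating carefully is the passage between the real-analytic identities ($\nabla\cdot\mathbf{F}$, $\nabla\times\mathbf{F}$) and the single complex equation $\partial(f-ig)/\partial\bar z = 0$: expanding $\frac{\partial}{\partial\bar z} = \frac12(\partial_x + i\partial_y)$ applied to $f - ig$ yields $\frac12\big[(\partial_x f + \partial_y g) + i(\partial_x g - \partial_y f)\big] = \frac12\big(\nabla\cdot\mathbf{F} + i\,\nabla\times\mathbf{F}\big)$, so the real and imaginary parts vanish simultaneously exactly when both divergence and curl vanish.

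The main obstacle is not conceptual but bookkeeping: getting the signs and factors right in the identification $\partial(f-ig)/\partial\bar z = \frac12(\nabla\cdot\mathbf{F} + i\nabla\times\mathbf{F})$, and making sure the holomorphic antiderivative $\Phi$ really does reproduce $\mathbf{F}$ (not $-\mathbf{F}$ or its conjugate) via equation (\ref{comp_eq}). Existence of the antiderivative on $\mathbb{R}^2$ is free by simple connectedness, so no genuine analytic difficulty arises there.
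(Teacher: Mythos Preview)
Your argument is correct. The core identity
\[
\frac{\partial}{\partial\bar z}(f - ig) \;=\; \tfrac12\bigl(\nabla\cdot\mathbf{F} - i\,\nabla\times\mathbf{F}\bigr)
\]
(note: with the paper's convention $\nabla\times\mathbf{F} := \nabla\cdot(J\mathbf{F}) = \partial_x g - \partial_y f$, the imaginary part carries a minus sign, not the plus you wrote; you anticipated such a slip and it does not affect the logic) makes both directions transparent, and your construction of the holomorphic antiderivative $\Phi$ on the simply connected domain $\mathbb{C}$ is exactly what is needed. One cosmetic point: you reuse the symbol $g$ for both the second component of $\mathbf{F}$ and for $\partial W/\partial z$, which is momentarily confusing.

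The paper's proof is much terser and is organized a bit differently. For the forward direction it simply cites the classical hydrodynamic construction of a holomorphic complex potential rather than spelling it out. For the converse it argues via the Cauchy--Riemann equations: if $W$ is holomorphic then $V$ and $H$ are harmonic conjugates, so $J\nabla H = -\nabla V$ and hence $\mathbf{F} = -2\nabla V$ with $V$ harmonic, from which $\nabla\cdot\mathbf{F} = \nabla\times\mathbf{F} = 0$ is immediate. Your Wirtinger-derivative treatment is the same idea in different clothing, but has the advantage of being fully self-contained and of handling both directions with a single computation.
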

\begin{proof}
If ${\bf F}$ is a smooth vector field with $\nabla \cdot {\bf F} = \nabla \times {\bf F} = 0,$ it is well-known that a holomorphic complex potential can be obtained explicitly \cite{Chorin_A_1990}.

Let us assume that a holomorphic complex potential $W$ of ${\bf F}$ exists. From the Cauchy-Riemann equations, we have
$${\bf F} = -2 \nabla V,$$
where $V$ is harmonic. Therefore, $\nabla \cdot {\bf F} = \nabla \times {\bf F} = 0$ holds.
\end{proof}
Equation (\ref{comp_eq}) presents a method to calculate a complex potential explicitly. If we integrate the left-hand side of the equation formally with respect to $z$, we may obtain all possible decompositions for a vector field.
\begin{example}
\normalfont
Let us consider a vector field given by
$$
{\bf F}(x,y) := \left(\begin{array}{c}
-x^2+2y\\
-y^2+x
\end{array}
\right).
$$
We calculate its HHD by the following procedures. First we define a function by
$$F(x,y) := (-x^2+2y) - i(-y^2+x).$$
This corresponds to $\frac{d \bar z}{d t}.$ Here we change the variables of $F(x,y)$ by
\begin{eqnarray*}
x &=& \frac{1}{2} (z + \bar z)\\
y &=& \frac{1}{2 i} (z - \bar z).
\end{eqnarray*}
This gives us
$$F(z,\bar z) = -\frac{1+i}{4} z^2 -\frac{1-i}{2} |z|^2 -\frac{1+i}{4}\bar z^2 -\frac{i}{2}(3 z - \bar z).$$
Next, we integrate $F(z, \bar z)$ with respect to $z$. This yields a complex potential as follows:
$$W(z,\bar z) = -\frac{1+i}{12} z^3 -\frac{1-i}{4} |z|^2z -\frac{1+i}{4}|z|^2\bar z -\frac{i}{2}(\frac{3}{2} z^2 - |z|^2).$$
By considering its real or imaginary part, we obtain a HHD of ${\bf F}:$
\begin{eqnarray*}
V &=& \frac{7 x^3}{24} + \frac{x^2 y}{8} + \frac{x y^2}{8} - \frac{3 x y}{4} + \frac{7 y^3}{24}\\
H &=& -\frac{x^3}{24} - \frac{x^2 y}{8} - \frac{x^2}{8} + \frac{x y^2}{8} + \frac{y^3}{24} + \frac{5 y^2}{8}.
\end{eqnarray*}
\end{example}
Although this calculation method requires the vector field to be given in analytic functions, it can be applied to more general cases if we fit the data with, for example, polynomial vector fields.

 The strict orthogonality of the HHD is characterized as follows:
\begin{lemma}
Let there be an HHD given by ${\bf F}({\bf x}) = -\nabla V({\bf x}) + J\nabla H({\bf x})$ and $W$ be the corresponding complex potential. Then, the HHD is strictly orthogonal if and only if the following identity holds.
$$\left|\frac{\partial W}{\partial z} \right|^2 = \left|\frac{\partial W}{\partial \bar z} \right|^2.$$
\end{lemma}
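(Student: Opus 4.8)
The plan is to compute the two Wirtinger derivatives $\partial W/\partial z$ and $\partial W/\partial \bar z$ explicitly in terms of the real data $V$ and $H$, and to show that the difference of their squared moduli is, up to a nonzero constant factor, exactly the pointwise inner product $\nabla V \cdot J\nabla H$ whose vanishing defines strict orthogonality. Concretely, from the preceding proposition we already have $\partial W/\partial z = f - ig$, and since ${\bf F} = -\nabla V + J\nabla H$ reads $f = -V_x + H_y$, $g = -V_y - H_x$, this gives $\partial W/\partial z = (H_y - V_x) + i(H_x + V_y)$. The same computation applied to $\partial W/\partial \bar z = \tfrac12(W_x + iW_y)$ with $W = -2V + 2iH$ yields $\partial W/\partial \bar z = -(V_x + H_y) + i(H_x - V_y)$.

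Next I would simply form
\[
\left|\frac{\partial W}{\partial z}\right|^2 - \left|\frac{\partial W}{\partial \bar z}\right|^2
= \bigl[(H_y - V_x)^2 + (H_x + V_y)^2\bigr] - \bigl[(H_y + V_x)^2 + (H_x - V_y)^2\bigr].
\]
Expanding, the squares $V_x^2, V_y^2, H_x^2, H_y^2$ cancel and only the cross terms survive, leaving $-4(V_x H_y - V_y H_x)$. Since $J\nabla H = (H_y, -H_x)^T$, one has $\nabla V \cdot J\nabla H = V_x H_y - V_y H_x$, so the identity becomes
\[
\left|\frac{\partial W}{\partial z}\right|^2 - \left|\frac{\partial W}{\partial \bar z}\right|^2 = -4\,\nabla V \cdot J\nabla H .
\]
The conclusion is then immediate: the right-hand side vanishes identically on $\mathbb{R}^2$ if and only if $\nabla V \cdot J\nabla H \equiv 0$, which is precisely the definition of the HHD being strictly orthogonal.

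I do not expect a genuine obstacle here; the only thing to be careful about is sign bookkeeping in the Wirtinger calculus (the factors of $2$ coming from $W = 2(-V + iH)$ and the $i$'s in $\partial/\partial z$, $\partial/\partial\bar z$). If a more conceptual rendering is preferred, one can instead use $\overline{\partial W/\partial z} = \partial \bar W/\partial \bar z$ to rewrite the left-hand side as the Jacobian determinant $\tfrac{\partial W}{\partial z}\tfrac{\partial \bar W}{\partial \bar z} - \tfrac{\partial W}{\partial \bar z}\tfrac{\partial \bar W}{\partial z}$ and identify it, via $dW\wedge d\bar W$ versus $dV\wedge dH$ and $dz\wedge d\bar z = -2i\,dx\wedge dy$, with a constant multiple of $V_x H_y - V_y H_x$; this gives the same identity with essentially no computation, and the sign/constant are fixed by a single scalar check.
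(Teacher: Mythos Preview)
Your proof is correct and, in fact, more direct than the paper's. The paper proceeds by introducing the auxiliary quantity $4\,\dfrac{\partial V}{\partial \bar z}\dfrac{\partial H}{\partial z}$, expanding it once in real coordinates to exhibit $-V_xH_y+V_yH_x$ as its imaginary part, and then a second time via $V=-\tfrac14(W+\bar W)$, $H=\tfrac1{4i}(W-\bar W)$ to extract $\tfrac14\bigl(|\partial W/\partial\bar z|^2-|\partial W/\partial z|^2\bigr)$ from the same imaginary part. You instead compute $\partial W/\partial z$ and $\partial W/\partial\bar z$ explicitly from $W=-2V+2iH$ and subtract the squared moduli directly, which gets to the identity $\bigl|\partial W/\partial z\bigr|^2-\bigl|\partial W/\partial\bar z\bigr|^2=-4\,\nabla V\cdot J\nabla H$ in one step with less bookkeeping. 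Both routes are pure calculation and yield the same equivalence; yours avoids the detour through $\partial V/\partial\bar z\cdot\partial H/\partial z$ and is arguably the cleaner way to write it up. Your alternative remark about reading the left-hand side as a Jacobian via $dW\wedge d\bar W$ is also sound and gives a conceptual gloss the paper does not mention.
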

\begin{proof}
Let us consider the next form:
$$4\frac{\partial V}{\partial \bar z}\frac{\partial H}{\partial z}.$$
By a direct calculation, we can demonstrate that this is equivalent to
$$\frac{\partial V}{\partial x}\frac{\partial H}{\partial x}+\frac{\partial V}{\partial y}\frac{\partial H}{\partial y} + i \left( -\frac{\partial V}{\partial x}\frac{\partial H}{\partial y}+\frac{\partial V}{\partial y}\frac{\partial H}{\partial x} \right).$$
On the other hand, we have
\begin{eqnarray*}
V &=& -\frac{1}{4}(W + \bar W)\\
H &=& \frac{1}{4 i}(W - \bar W).
\end{eqnarray*}
Therefore we have
$$4\frac{\partial V}{\partial \bar z}\frac{\partial H}{\partial z} = \frac{i}{4}\left( \frac{\partial W}{\partial z} \frac{\partial W}{\partial \bar z} -\frac{\partial W}{\partial z} \frac{\partial \bar W}{\partial \bar z}+\frac{\partial \bar W}{\partial z} \frac{\partial W}{\partial \bar z}-\frac{\partial \bar W}{\partial z} \frac{\partial \bar W}{\partial \bar z}\right)$$
Noting that the complex conjugate of $\frac{\partial \bar W}{\partial z} \frac{\partial \bar W}{\partial \bar z}$ is $\frac{\partial W}{\partial z} \frac{\partial W}{\partial \bar z},$ we obtain
$$ -\frac{\partial V}{\partial x}\frac{\partial H}{\partial y}+\frac{\partial V}{\partial y}\frac{\partial H}{\partial x} = \frac{1}{4} \left( \left|\frac{\partial W}{\partial \bar z} \right|^2 - \left|\frac{\partial W}{\partial z} \right|^2\right)$$
by comparing the imaginary part.
\end{proof}
This lemma gives us a method to construct a strictly orthogonal HHD. Namely, we may solve the following equation for $\phi$ by the power series method:
\begin{equation}\label{ohhd_eqn}
\left|\frac{\partial W_0}{\partial z} \right|^2 = \left|\frac{\partial W_0}{\partial \bar z} + \phi'(\bar z) \right|^2,
\end{equation}
where $W_0$ is a fixed complex potential and $\phi$ is a holomorphic function.
\begin{example}
\normalfont
Here we present another proof of Proposition \ref{2d_lin_ex} by solving equation (\ref{ohhd_eqn}). For linear vector fields on $\mathbb{R}^2,$ the function $F = f-ig$ assumes the from $F= a z + b \bar z,$ where $a$ and $b$ are complex constants. Then a complex potential 
$$W_0= \frac{a}{2}z^2+b |z|^2$$
is obtained by integrating it with respect to $z$. Therefore equation (\ref{ohhd_eqn}) is now in the following form:
\begin{equation}\label{ohhd_eqn_lin}
\left|a z + b \bar z\right|^2 = \left|b z+ \phi'(\bar z) \right|^2.
\end{equation}
If $b$ = 0, then equation (\ref{ohhd_eqn_lin}) has a solution 
$$\phi(z) = \frac{a}{2} z^2.$$
 If $b \neq 0,$ the function
$$\phi(z) = \frac{\bar a b}{2 \bar b} z^2$$
is verified to be a solution. This establishes the existence of a strictly orthogonal HHD.
\end{example}
The next theorem is obtained similarly.
\begin{theorem}[Theorem \ref{thm_quad}]
Let there be given a system of ordinary equations of the following form:
\begin{eqnarray*}
\frac{d x}{d t} &=& f(x,y) = p_1 x^2 + q_1 xy + r_1 y^2,\\
\frac{d y}{d t} &=& g(x,y) = p_2 x^2 + q_2 xy + r_2 y^2,
\end{eqnarray*}
where $p_i, q_i, r_i$ $(i=1,2)$ are real constants. If the coefficients satisfy the condition
\begin{equation}\label{cond_ohhd_e}
q_1^2 -2 (p_2-r_2) q_1 -4 p_2 r_2 + q_2^2 +2 (p_1-r_1) q_2 -4 p_1 r_1 = 0,
\end{equation}
then there exists a strictly orthogonal HHD of the vector field ${\bf F}(x,y) = \left(f(x,y),g(x,y)\right)^T$.
\end{theorem}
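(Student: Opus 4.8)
The plan is to carry out the complex-potential computation exactly as in the example following Proposition~\ref{2d_lin_ex}, but one degree higher. First I would set $F := f - ig$ and pass to the variables $z,\bar z$ through $x = \tfrac{1}{2}(z+\bar z)$, $y = -\tfrac{i}{2}(z-\bar z)$. Since $f$ and $g$ are homogeneous quadratics, $F$ becomes a homogeneous quadratic
$$F(z,\bar z) = \alpha z^2 + \beta z\bar z + \gamma \bar z^2,$$
whose coefficients are explicit linear forms in $p_i, q_i, r_i$; a short computation gives $\beta = \tfrac{1}{2}\big[(p_1+r_1) - i(p_2+r_2)\big]$ and $\gamma = \tfrac{1}{4}\big[(p_1-r_1+q_2) + i(q_1-p_2+r_2)\big]$. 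Expanding $|\beta|^2 = 4|\gamma|^2$ and comparing with (\ref{cond_ohhd_e}) shows that the hypothesis is precisely the statement $|\beta| = 2|\gamma|$.

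Integrating $F$ with respect to $z$ produces a complex potential $W_0 = \tfrac{\alpha}{3}z^3 + \tfrac{\beta}{2}z^2\bar z + \gamma z\bar z^2$, for which $\partial_z W_0 = F$ and $\partial_{\bar z} W_0 = \tfrac{\beta}{2}z^2 + 2\gamma z\bar z$. By the corollary characterizing complex potentials, every complex potential of ${\bf F}$ has the form $W_0 + \phi(\bar z)$ with $\phi$ holomorphic, so by the lemma characterizing strict orthogonality it suffices to solve Equation~(\ref{ohhd_eqn}). A degree count forces $\phi'(\bar z) = c\bar z^2$ for a single constant $c\in\mathbb{C}$, reducing the problem to finding $c$ with
$$\bigl|\alpha z^2 + \beta z\bar z + \gamma \bar z^2\bigr|^2 = \Bigl|\tfrac{\beta}{2}z^2 + 2\gamma z\bar z + c\bar z^2\Bigr|^2$$
for all $z$.

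Writing $z = re^{i\theta}$, both sides equal $r^4$ times a trigonometric polynomial in $\theta$ supported on the frequencies $0, \pm 2, \pm 4$, so the identity is equivalent to three scalar equations: $\alpha\bar\gamma = \tfrac{1}{2}\beta\bar c$ (frequency $4$), $\alpha\bar\beta = 2\gamma\bar c$ (frequency $2$), and $|\alpha|^2 + |\beta|^2 + |\gamma|^2 = \tfrac{1}{4}|\beta|^2 + 4|\gamma|^2 + |c|^2$ (frequency $0$). When $\beta \neq 0$ I would take $c := 2\bar\alpha\gamma/\bar\beta$, which satisfies the frequency-$4$ equation by construction; substituting into the remaining two equations and using $|\beta|^2 = 4|\gamma|^2$ makes both hold identically (note that $|\beta|^2 = 4|\gamma|^2$ and $\beta \neq 0$ also force $\gamma \neq 0$, so the formula for $c$ is legitimate). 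When $\beta = 0$, the hypothesis (\ref{cond_ohhd_e}) forces $\gamma = 0$, hence $F = \alpha z^2$, and $c := \alpha$ works; the case $\alpha = \beta = \gamma = 0$ is vacuous. In every case $W := W_0 + \tfrac{c}{3}\bar z^3$ is the complex potential of a strictly orthogonal HHD of ${\bf F}$, which proves the theorem.

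The conceptual content is light: the complex-potential formalism of this section does the work, turning the question into an identity between homogeneous polynomials which, by homogeneity, is governed by finitely many coefficients. The main obstacle is purely computational --- obtaining $\alpha, \beta, \gamma$ without sign errors and verifying that $|\beta|^2 = 4|\gamma|^2$ really is (\ref{cond_ohhd_e}), i.e. that $(p_1+r_1)^2 + (p_2+r_2)^2 = (p_1-r_1+q_2)^2 + (q_1-p_2+r_2)^2$ expands to the displayed condition --- together with keeping the degenerate sub-cases $\beta = 0$ (and $\alpha = 0$) straight so that the ansatz $\phi'(\bar z) = c\bar z^2$ still yields a genuine solution. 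The same computation, incidentally, also yields the converse, since $\phi'$ is forced to be of that form.
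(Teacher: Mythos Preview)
Your proof is correct and follows essentially the same approach as the paper: rewrite $F = f - ig$ in the variables $z,\bar z$, integrate to obtain a particular complex potential $W_0$, and solve equation~(\ref{ohhd_eqn}) via the cubic ansatz $\phi(\bar z)=\tfrac{c}{3}\bar z^3$, the key point being that the hypothesis~(\ref{cond_ohhd_e}) is exactly $|\beta|^2=4|\gamma|^2$. Your Fourier-in-$\theta$ bookkeeping is more explicit than the paper's ``by comparing both sides,'' and you spell out the $\beta=0$ degeneration that the paper leaves as ``easy,'' but the argument is the same.
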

\begin{proof}
First, we obtain the coefficients $a, b, c \in \mathbb{C}$ determined by the identity 
$$ a z^2 + b |z|^2 + c\bar z^2 = p_1 x^2 + q_1 xy + r_1 y^2 -i \left(p_2 x^2 + q_2 xy + r_2 y^2\right).$$
By a direct calculation, we have
\begin{eqnarray*}
	a &=& \frac{1}{4}\left( (p_1-q_2-r_1) + i(-p_2-q_1+r_2) \right)\\
	b &=& \frac{1}{2}\left( (p_1+r_1) - i(p_2+r_2) \right)\\
	c &=& \frac{1}{4}\left( (p_1+q_2-r_1) + i(-p_2+q_1+r_2) \right).\\
\end{eqnarray*}
Next, we find a complex potential $W_0(z, \bar z)$ by integrating $ a z^2 + b |z|^2 + c\bar z^2$ with respect to $z$:
$$W_0(z, \bar z) = \frac{a}{3} z^3 + \frac{b}{2} |z|^2 z + c |z|^2 \bar z.$$
Then, equation (\ref{ohhd_eqn}) assumes the following form:
$$\left|a z^2 + b |z|^2 + c\bar z^2\right|^2 = \left|\frac{b}{2} z^2 + 2 c |z|^2 + \phi'(\bar z) \right|^2.$$
Here we search for a solution of this equation with the form $\phi(z) = C z^3.$ Let us first consider the case with $b \neq 0.$ By comparing both sides of the equation, we have
$$ C = \frac{2 \bar a c}{3 \bar b},$$
and the necessary and sufficient condition for the existence of a solution is
$$|b|^2 = 4 |c|^2,$$
which is equivalent to equation (\ref{cond_ohhd_e}). For the case with $b = 0,$ it is easy to explicitly construct a strictly orthogonal HHD.
\end{proof}
\begin{remark}
\normalfont
Equation (\ref{cond_ohhd_e}) is satisfied for a divergence-free or rotation-free case.
\end{remark}
\begin{remark}
\normalfont
The complex potential obtained is of the following form:
\begin{equation}\label{s_complex}
W(z, \bar z) = \frac{a}{3} z^3 + \frac{b}{2} |z|^2 z + c |z|^2 \bar z + \frac{2 \bar a c}{3 \bar b} \bar z^3.
\end{equation}
By taking the real part or the imaginary part of $W$, we obtain a strictly orthogonal HHD explicitly. 
\end{remark}
For the systems considered in the last theorem, the method of linearization is not applicable. However, if a strictly orthogonal HHD is obtained, we may determine the stability of the equilibrium points because the potential function may be used as a Lyapunov function.
\begin{example}
\normalfont
Let us consider the following system of equations:
\begin{eqnarray*}
\frac{d x}{d t} &=& x^2 - y^2,\\
\frac{d y}{d t} &=& -x^2 - y^2.
\end{eqnarray*}
For this system, a strictly orthogonal HHD can be obtained immediately:
\begin{eqnarray*}
V &=& -\frac{x^3}{3} + \frac{y^3}{3},\\
H &=& \frac{x^3}{3} - \frac{y^3}{3}.
\end{eqnarray*}
By a direct calculation, it can be verified that the complex potential obtained in equation (\ref{s_complex}) is the same as that presented above. By the form of the potential function $V$, we observe that the origin is not stable.
\end{example}
\begin{example}
\normalfont
Let us consider the following system of equations:
\begin{eqnarray*}
\frac{d x}{d t} &=&  x^2 -2 xy +3y^2,\\
\frac{d y}{d t} &=& 4 x^2 -4 xy +2 y^2,
\end{eqnarray*}
The coefficients satisfy equation (\ref{cond_ohhd_e}); therefore a strictly orthogonal HHD exists. This is given by
$$\frac{1}{6} z^3 + \frac{2-3i}{2} |z|^2 z - \frac{3+2i}{2} |z|^2 \bar z - \frac{1}{6}\frac{3+2i}{2+3i} \bar z^3.$$
Equivalently, in a more useful form:
$$-\frac{19 x^3}{39} + \frac{9 x^2 y}{13} + \frac{17 y^3}{39}- \frac{7 x y^2}{13}+ i \left(-\frac{95 x^3}{39} + \frac{45 x^2 y}{13} - \frac{35 x y^2}{13} + \frac{85 y^3}{39}\right).$$
By a direct calculation, it can be verified that this potential yields a strictly orthogonal HHD.
\end{example}
It is noteworthy that a strictly orthogonal HHD does not necessarily exist, and the method may fail.

\section{Concluding remarks}\label{con_rem}

Vector fields with strictly orthogonal HHDs are as ``easy'' as gradient vector fields in that the details of their behavior can be determined by studying a function. We presented some methods to construct such a decomposition in the case of linear or planar vector fields. Further, we demonstrated its use by examples including the Van der Pol oscillator. Strictly orthogonal HHDs are useful if they are available.

However, it is clear that the if statement in the previous sentence is rather ambitious. A general vector field does not admit a strictly orthogonal HHD, as observable in results such as Theorem \ref{thm_quad}.   
This is the most substantial obstacle in application, but it could be bypassed. As is evident in the example of the Van der Pol oscillator, an approximation by a strictly orthogonal HHD provides significant information regarding the behavior of a system. Thus, if we can approximate general vector fields with them, we can obtain a useful analysis method. This aspect should be explored in further research.

The relation between strictly orthogonal HHD and the SDE decomposition is another topic that deserves further research. In particular, the equivalence between these two methods is of interest because it provides valuable insights. As our discussion on this point is limited to linear cases, consideration of nonlinear systems is desirable.

\section*{Acknowledgements}
I would like to express my gratitude to Professor Masashi Kisaka for his patient guidance and useful critique of this research. This study was supported by Grant-in-Aid for JSPS Fellows (17J03931). I would like to thank Editage (www.editage.jp) for English language editing. Further, I would like to thank the anonymous reviewers for their valuable comments and suggestions.

\bibliographystyle{AIMS}
\bibliography{application_HHD}
\end{document}